\newtheorem{theorem}{Theorem}[section]
\newtheorem{lemma}[theorem]{Lemma}
\newtheorem{proposition}[theorem]{Proposition}
\newcommand{\proof}{\noindent{\bf Proof.\ }}
\newcommand{\qed}{\hfill $\square$ \bigskip}
\def\cp{\,\square\,}
\DeclareMathOperator {\gp} {gp}
\let\deg\relax
\DeclareMathOperator {\deg} {deg}
\DeclareMathOperator {\Pl} {P\ell}
\title{The general position avoidance game and hardness of general position games}
\author[1]{Ullas Chandran S. V.\thanks{Email: \texttt{svuc.math@gmail.com}}}
\author[2,3,4]{Sandi Klav\v{z}ar\thanks{Email: \texttt{sandi.klavzar@fmf.uni-lj.si}}}
\author[1]{Neethu P. K.\thanks{Email: \texttt{p.kneethu.pk@gmail.com}}}
\author[5]{Rudini Sampaio \thanks{Email: \texttt{rudini@dc.ufc.br}}}
\affil[1]{Dept. Mathematics, Mahatma Gandhi College, University of Kerala, Thiruvananthapuram, India}
\affil[2]{Faculty of Mathematics and Physics, University of Ljubljana, Slovenia}
\affil[3]{Faculty of Natural Sciences and Mathematics, University of Maribor, Slovenia}
\affil[4]{Institute of Mathematics, Physics and Mechanics, Ljubljana, Slovenia}
\affil[5]{Dept. Computa\c c\~ao, Universidade Federal do Cear\'a, Fortaleza, Brazil}
\begin{document}
\maketitle
\begin{abstract}
Given a graph $G$, a set $S$ of vertices in $G$ is a general position set if no triple of vertices from $S$ lie on a common shortest path in $G$. The general position achievement/avoidance game is played on a graph $G$ by players A and B who alternately select vertices of $G$. A selection of a vertex by a player is a legal move if it has not been selected before and the set of selected vertices so far forms a general position set of $G$. The player who picks the last vertex is the winner in the general position achievement game and is the loser in the avoidance game. In this paper, we prove that the general position achievement/avoidance games are PSPACE-complete even on graphs with diameter at most 4. For this, we prove that the \textit{mis\`ere} play of the classical Node Kayles game is also PSPACE-complete. As positive results, we obtain linear time algorithms to decide the winning player of the general position avoidance game in rook's graphs, grids, cylinders, and lexicographic products with complete second factors. 
\end{abstract}

\noindent {\bf Key words:} general position sets; general position number; achievement game; avoidance game; graph product

\medskip\noindent

{\bf AMS Subj.\ Class:} 05C57; 05C69; 68Q25

\section{Introduction}
\label{sec:intro}
In 1985, Buckley and Harary \cite{buckley-1987} introduced two geodetic games for graphs: the geodetic achievement game and the geodetic avoidance game. In both games, two players (A and B) alternately select vertices which are not in the geodetic closure of the vertices selected so far, where the geodetic closure of a subset $S$ of vertices is the set of all vertices on geodesics (shortest paths) between two vertices in $S$. The first player unable to move is the loser in the achievement game, but is the winner in the avoidance game. 
The authors determine the winner of both games in several families of graphs \cite{buckley-1987}.
In 1988, the result of \cite{buckley-1987} regarding wheel graphs was improved in \cite{nec-1988} and, in 2003, Haynes, Henning and Tiller~\cite{haynes-2003} obtained results for trees and complete multipartite graphs.

Clearly, at the end of both geodetic games, the geodetic closure of the set $S$ of selected vertices is the whole vertex set of the graph, since no other vertex can be selected. However, possibly $S$ is not minimal; that is, $S$ may contain a proper subset whose geodetic closure is the same as $S$.
For example, in a path $P_n$ $v_1v_2\ldots v_n$, a possible sequence of moves is $v_1,\ldots,v_n$ with Player A selecting the vertices $v_i$ for $i$ odd and Player B selecting the vertices $v_i$ for $i$ even. So all vertices may be selected, but the geodetic closure of only two vertices $v_1$ and $v_n$ contains all vertices.

In order to avoid this situation and obtain minimal subsets regarding the geodetic closure, Klav\v{z}ar,  Neethu and Chandran~\cite{klavzar-2021e} introduced two games in 2021: the general position achievement game and the general position avoidance game ($\gp$-{\it achievement} and $\gp$-{\it avoidance} for short).
In both games, the set $S$ of vertices selected by the two players must be a \emph{general position set}, which is a set such that no three distinct vertices from $S$ lie on a common shortest path in $G$.
In the same example of the path $P_n$ $v_1v_2\ldots v_n$ for $n\geq 3$, Player A chooses the first vertex (say $v_j$), Player B chooses the second vertex (say $v_k$ for $k\neq j$), and then Player A is unable to select the third vertex, losing the $\gp$-achievement game and winning the $\gp$-avoidance game.

The {\sc general position problem} of finding a largest general position set of a graph is a generalization of the No-three-in-line problem in the $n\times n$ grid from discrete geometry, which can be traced to the famous Dudeney's ``Puzzle with Pawns'' of his book ``Amusements in Mathematics'' \cite{dudeney-1917} from 1917. In 1995, Korner~\cite{korner-1995} investigated the general position problem on hypercubes, while in~\cite{ullas-2016} it was considered for the first time on general graphs. However, the formalisation of the problem as we know it today and the notation that is in use have been introduced in~\cite{manuel-2018a, manuel-2018b}. Also see~\cite{froese-2017, ku-2018, misiak-2016, payne-2013} for the related general position subset selection problem in computational geometry.
In 2018, it was proved that the {\sc general position problem} is NP-hard \cite{manuel-2018a}.
In 2019, general position sets in graphs were characterized \cite{bijo-2019} and, after this, several additional papers on the general position problem were published, many of them with bounds on the maximum size of a general position set and exact values in graph products \cite{thomas-2020, klavzar-2021d, klavzar-2021a, klavzar-2021b, klavzar-2019, patkos-2020,  tian-2021a,tian-2021b}.

As mentioned before, the $\gp$-achievement game and the $\gp$-avoidance game were introduced in 2021 in~\cite{klavzar-2021e}.
In the game terminology, the $\gp$-achievement game is the normal game (the last to play wins) and the $\gp$-avoidance game is the mis\`{e}re game (the last to play loses).
From the classical Zermelo-von Neumann theorem, in both games one of the two players has a winning strategy, since they are finite perfect-information games without draw ~\cite{zermelo13}.
So, the main question is: given a graph in one of the two general position games, which player has a winning strategy?
The two games are independent, that is, winning the $\gp$-achievement game in a graph does not mean that the player loses the $\gp$-avoidance game in the same graph. 
Figure~\ref{fig1} shows a graph on which player A has a winning strategy in both games. In both games, A first selects the central vertex. By the symmetry of the graph, we may assume w.l.o.g. that B replies by playing the vertex $2$. With this, optimal strategies for A are shown in the figure.

\begin{figure}[ht!]\centering\scalebox{1.0}{
	\begin{tikzpicture}
	\tikzstyle{vertex}=[draw,circle,fill=white!25,minimum size=14pt,inner sep=0.1pt]
	\tikzstyle{vertex2}=[draw,circle,fill=black!25,minimum size=14pt,inner sep=0.1pt]

	\node[vertex](u) at (4,1) {1};
	\node[vertex](c1) at (2,2) {2};
	\node[vertex](c2) at (4,2) {};
	\node[vertex](c3) at (6,2) {};

	\node[vertex](s1) at (2,0) {3};
	\node[vertex](s2) at (4,0) {};
	\node[vertex](s3) at (6,0) {};

	\draw (c1)--(c2)(c2)--(c3);
	\draw (s1)--(s2)(s2)--(s3);
	\draw (u)--(c1)(u)--(c2)(u)--(c3);
	\draw (u)--(s1)(u)--(s2)(u)--(s3);
    \path[-](c1) edge[out=300, in=60] (s1)
            (c2) edge[out=330, in=30] (s2)
            (c3) edge[out=300, in=60] (s3)
            (c1) edge[out=30, in=150] (c3)
            (s1) edge[out=-30, in=210] (s3);

	\node[vertex] (u) at (4+7,1) {1};
	\node[vertex](c1) at (2+7,2) {2};
	\node[vertex](c2) at (4+7,2) {3};
	\node[vertex](c3) at (6+7,2) {4};

	\node[vertex](s1) at (2+7,0) {};
	\node[vertex](s2) at (4+7,0) {};
	\node[vertex](s3) at (6+7,0) {};

	\draw (c1)--(c2)(c2)--(c3);
	\draw (s1)--(s2)(s2)--(s3);
	\draw (u)--(c1)(u)--(c2)(u)--(c3);
	\draw (u)--(s1)(u)--(s2)(u)--(s3);
    \path[-](c1) edge[out=300, in=60] (s1)
            (c2) edge[out=330, in=30] (s2)
            (c3) edge[out=300, in=60] (s3)
            (c1) edge[out=30, in=150] (c3)
            (s1) edge[out=-30, in=210] (s3);

	\end{tikzpicture}}
	\caption{\label{fig1}Player A wins in both games. The sequence of the chosen vertices is represented by numbers. Odd numbers are player A moves. The game $\gp$-achievement is illustrated in the left and the game $\gp$-avoidance in the right.}
\end{figure}
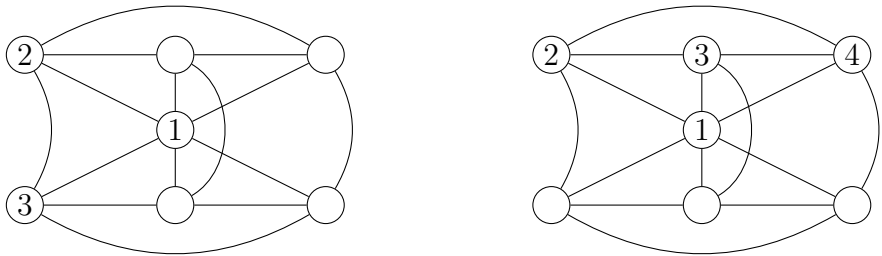

In \cite{klavzar-2021e}, the $\gp$-achievement game was investigated on Hamming graphs and Cartesian and lexicographic products. Among other results, it was proved that player A wins the $\gp$-achievement game on a bipartite graph $G$ if and only if the number of isolated vertices in $G$ is odd.

In this paper, we determine the computational complexity of both games: the $\gp$-achievement game and the $\gp$-avoidance game are PSPACE-complete even in graphs with diameter at most $4$. We obtain the hardness of the $\gp$-achievement game by a reduction from the classical Node Kayles game, which was proved to be PSPACE-complete~\cite{schaefer-1978} in 1978. The hardness of the $\gp$-avoidance game is obtained by a reduction from the \textit{mis\`ere} Node Kayles game, which is just as the normal Node Kayles game, except that the last player to move loses the game. In order to obtain our result, we first prove that the \textit{mis\`ere} Node Kayles game is also PSPACE-complete.
To the best of our knowledge, although the normal Node Kayles game was proved PSPACE-complete in 1978, the PSPACE-hardness of the \textit{mis\`ere} Node Kayles game was not proved before.

In the second part of the paper we complement results from~\cite{klavzar-2021e} regarding the $\gp$-achievement game by proving several results for the $\gp$-avoidance game on Cartesian and lexicographical products. We obtain linear time algorithms to decide the winning player of the general position avoidance game in rook's graphs, grids, cylinders, and lexicographic products with complete second factors.

\section{Preliminary results and examples}
\label{sec:preliminary}

In this section we first recall other concepts and  notations needed. 

All graphs considered, here, are finite, simple and without loops or multiple edges. The {\em distance} $d_G(u,v)$ between vertices $u$ and $v$ is the length of a shortest $u,v$-path. A $u,v$-path of minimum length is also called a $u,v$-{\it geodesic}. The {\em interval} $I_G[u,v]$ between $u$ and $v$  is the set of vertices that lie on some $u,v$-geodesic. For $S\subseteq V(G)$, set $I_G[S]=\bigcup_{_{u,v\in S}}I_G[u,v]$.
When the graph $G$ is clear from the context, we omit the subscript and simply write $d(u,v)$, $I[u,v]$ and $I[S]$.

A set of vertices $S\subseteq V(G)$ is a {\em general position set} if no three vertices from $S$ lie on a common geodesic. The size of a smallest general position set of $G$ is denoted by $\gp(G)$ and called the {\em general position number} of $G$.
For example, notice that $\gp(C_n)=3$ for any cycle $C_n$ with $n\geq 5$.

Following the notations from~\cite{klavzar-2021e}, the sequence of vertices played in the position games on a graph $G$ will be denoted by $a_1, b_1, a_2, b_2, \ldots$, that is, the vertices played by A are $a_1, a_2, \ldots$, and the vertices played by B are $b_1, b_2, \ldots$ For instance, we may say that A starts the game by playing $a_1 = x$, where $x\in V(G)$. Suppose that $x_1, \ldots, x_j$ are vertices played so far on the graph $G$. Then we say that $y\in V(G)$ is a {\em playable vertex} if $y\notin \{x_1, \ldots, x_j\}$ and $\{x_1, \ldots, x_j\} \cup \{y\}$ is a general position set of $G$. Let $\Pl_G(x_1, \ldots, x_j)$ be the set of all playable vertices after the vertices $x_1, \ldots, x_j$ have already been played; we may sometimes simplify the notation $\Pl_G(x_1, \ldots, x_j)$ to $\Pl_G(\ldots x_j)$. For instance, if $x$ and $y$ are arbitrary vertices of a path $P$, then $\Pl_P(x) = V(P)\setminus \{x\}$ and $\Pl_P(x,y) = \emptyset$. In fact, if $G$ is an arbitrary graph, then $\Pl_G(a_1) = V(G)\setminus \{a_1\}$. Denoting by $S$ the set of vertices $\{x_1, \ldots, x_j\}$  played so far, we may also write $\Pl_G(S)$ for $\Pl_G(x_1, \ldots, x_j)$.  The following observation will turn out to be quite useful, hence we state it as a lemma for further use.

\begin{lemma}
\label{lem:playable}{\rm \cite{klavzar-2021e}}
Let $S$ be the sequence of played vertices so far on a graph $G$. Then $x\in \Pl_G(S)$ if and only if the following two conditions hold:  
\begin{enumerate}
\item[(i)] if $u,v\in S$, then $x\notin I[u,v]$, and 
\item[(ii)] if $u\in S$, then $I[x,u]\cap S = \{u\}$. 
\end{enumerate}
\end{lemma}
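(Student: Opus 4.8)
The plan is to reduce the claim to the defining property of general position sets and to exploit the invariant that, during the game, the set $S$ of already-played vertices is itself a general position set. Recall that $T\subseteq V(G)$ is a general position set precisely when no vertex of $T$ lies strictly between two others; that is, for every triple of pairwise distinct $p,q,r\in T$ one has $r\notin I[p,q]$. Since $S$ is already a general position set, adjoining a new vertex $x$ can only create a forbidden triple that involves $x$. Hence $x\in\Pl_G(S)$ is equivalent to requiring both $x\notin S$ and that no triple $\{x,u,v\}$ with $u,v\in S$ violates the general position property. This reduction is the conceptual heart of the argument, and it is exactly where the hypothesis that $S$ is a general position set gets used.

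Next I would classify the ways a triple $\{x,u,v\}$, with $u,v\in S$ distinct, can fail the general position condition, according to which of the three vertices lies on a geodesic between the other two. If $x$ is the middle vertex, the obstruction is $x\in I[u,v]$; forbidding this for all pairs $u,v\in S$ is exactly condition~(i). If instead a played vertex, say $u$, is the middle vertex, the obstruction is $u\in I[x,v]$, and by symmetry the remaining case $v\in I[x,u]$ is the same statement with the roles of $u$ and $v$ interchanged. Forbidding a played vertex from lying on a geodesic between $x$ and another played vertex is precisely the assertion that $I[x,u]\cap S=\{u\}$ for every $u\in S$, which is condition~(ii). Reading condition~(i) with the degenerate choice $u=v$ also yields $x\neq u$ for every $u\in S$, so the side requirement $x\notin S$ is absorbed into~(i).

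With this dictionary in hand, both implications become routine verifications. For the forward direction I would assume $x\in\Pl_G(S)$, so $x\notin S$ and $S\cup\{x\}$ is a general position set; condition~(i) then follows by applying the general position property to the distinct triple $x,u,v$ (and trivially when $u=v$), and condition~(ii) follows because any $w\in I[x,u]\cap S$ with $w\neq u$ would be a played vertex (so $w\neq x$) lying between $x$ and $u$, contradicting the general position property. For the converse I would assume (i) and (ii), deduce $x\notin S$ from~(i), and check that each triple $\{x,u,v\}$ survives: $x\notin I[u,v]$ by~(i), while $u\notin I[x,v]$ and $v\notin I[x,u]$ follow from~(ii), since $I[x,v]\cap S=\{v\}$ and $I[x,u]\cap S=\{u\}$ force any member of $S$ on these geodesics to coincide with the corresponding endpoint.

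The only genuinely delicate points are bookkeeping rather than deep obstacles. One must be careful that the two ``endpoint-in-the-middle'' cases are symmetric and are both captured by the single condition~(ii), and one must note that $I[x,u]$ always contains the endpoint $u$, so that the correct normalization is ``$\cap\, S=\{u\}$'' rather than ``$\cap\, S=\emptyset$''. I expect no serious difficulty beyond keeping the quantifiers straight and remembering that the reduction to triples involving $x$ depends on $S$ being a general position set to begin with.
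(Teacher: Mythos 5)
Your proof is correct. There is, however, nothing in this paper to compare it against: the lemma is stated here with the citation \cite{klavzar-2021e} and no proof is given, since it is imported from the earlier paper on the achievement game. Your argument is the natural self-contained one and it is complete. Two points you handle that are genuinely necessary: first, the reduction to triples containing $x$ uses the game invariant that the already-played set $S$ is itself a general position set (without this, conditions (i) and (ii) could hold while $\Pl_G(S)=\emptyset$); second, the degenerate reading $u=v$ of condition (i), giving $x\notin I[u,u]=\{u\}$ and hence $x\notin S$, is exactly what makes the equivalence hold verbatim (with (i) restricted to distinct $u,v$, the singleton $S=\{x\}$ would be a counterexample). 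The one step you use silently and could state in a line is the equivalence between ``$x,u,v$ lie on a common shortest path'' and ``one of the three lies in the interval between the other two,'' which holds because subpaths of geodesics are geodesics.
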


We next recall a characterization of general position sets from~\cite{bijo-2019}, for which some preparation is needed. Let $G$ be a connected graph, $S\subseteq V(G)$, and ${\cal P} = \{S_1, \ldots, S_p\}$ a partition of $S$. Then ${\cal P}$ is \emph{distance-constant} if for any $i,j\in [p]$, $i\ne j$, the value $d(u,v)$, where $u\in S_i$ and $v\in S_j$ is independent of the selection of $u$ and $v$.  If ${\cal P}$ is a distance-constant partition, and $i,j\in [p]$, $i\ne j$, then let $d(S_i, S_j)$ be the distance between the sets $S_i$ and $S_j$, that is, the distance between one arbitrary vertex of $S_i$ and one of $S_j$. We say that a distance-constant partition ${\cal P}$ is {\em in-transitive} if $d(S_i, S_k) \ne d(S_i, S_j) + d(S_j,S_k)$ holds for arbitrary pairwise different $i,j,k\in [p]$. With this notations in hand, we have the following characterization.
\begin{theorem}
{\rm\cite[Theorem 3.1]{bijo-2019}}
\label{thm:2.1}
Let $G$ be a connected graph. Then $S\subseteq V(G)$ is a general position set if and only if the components of $G[S]$ are complete subgraphs, the vertices of which form an in-transitive, distance-constant partition of $S$.
\end{theorem}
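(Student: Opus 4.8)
The plan is to translate the defining property of a general position set into the betweenness condition and then analyse the three possible ways three vertices of $S$ can be distributed among the components of $G[S]$. Recall that three distinct vertices $x,y,z$ lie on a common geodesic precisely when one of them lies in the interval of the other two, say $y\in I[x,z]$, i.e. $d(x,y)+d(y,z)=d(x,z)$. Throughout, let $S_1,\dots,S_p$ be the vertex sets of the components of $G[S]$; these partition $S$, and I will show that this is exactly the partition ${\cal P}$ in the statement.

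For the forward direction, assume $S$ is a general position set. First I would show each $S_i$ induces a complete graph. If not, some component contains vertices $u,w$ nonadjacent in $G$; taking a shortest $u,w$-path $v_0v_1\cdots v_k$ inside $G[S]$ (of length $k\ge 2$), the shortest-path property forces $d(v_0,v_2)=2$, since it cannot be $1$ (else $v_0v_2$ would be an edge of $G[S]$ shortcutting the path). Then $v_0,v_1,v_2$ are three vertices of $S$ on the geodesic $v_0v_1v_2$, a contradiction; hence every component is complete, so any two vertices in the same $S_i$ are adjacent.

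Next I would establish that ${\cal P}=\{S_1,\dots,S_p\}$ is distance-constant. Fix $v$ outside $S_i$ and take $u,u'\in S_i$; since $u,u'$ are adjacent, $|d(u,v)-d(u',v)|\le 1$, and if the two distances differed they would differ by exactly $1$, putting $u$ (or $u'$) on a geodesic from the other to $v$ — again three vertices of $S$ on a geodesic. Thus $d(\cdot,v)$ is constant on each $S_i$, and combining this with the symmetric statement (fixing a vertex of $S_i$ and varying within $S_j$) shows $d(u,v)$ depends only on the pair of components, i.e. ${\cal P}$ is distance-constant. For in-transitivity, suppose $d(S_i,S_k)=d(S_i,S_j)+d(S_j,S_k)$ for distinct $i,j,k$; picking $u\in S_i$, $v\in S_j$, $w\in S_k$ and using distance-constancy rewrites this as $d(u,w)=d(u,v)+d(v,w)$, so $v\in I[u,w]$, contradicting general position. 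For the converse, assume the components are complete and the induced partition is distance-constant and in-transitive, and suppose toward a contradiction that distinct $x,y,z\in S$ satisfy $y\in I[x,z]$. If all three lie in one $S_i$ they are pairwise adjacent, so $d(x,y)+d(y,z)=2\ne 1=d(x,z)$, impossible. If exactly two, say $x,y$, lie in one $S_i$ and $z$ in $S_j$, then $d(x,y)=1$ and $d(x,z)=d(y,z)=d(S_i,S_j)$ by distance-constancy, and a direct check of the three betweenness equalities forces $1=0$ or $d(S_i,S_j)=\tfrac12$, both impossible. If all three lie in distinct components, distance-constancy turns the betweenness relation into an additive relation among $d(S_i,S_j),d(S_j,S_k),d(S_i,S_k)$, which in-transitivity forbids. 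Every case yields a contradiction, so $S$ is a general position set.

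The main obstacle I anticipate is the distance-constancy step: completeness of the components and in-transitivity each follow from a single application of the betweenness condition, but distance-constancy requires the sharper observation that adjacency within a component pins external distances to within $1$, and then a combination of the ``fix $v$'' and ``fix $u$'' versions to upgrade constancy along one component into joint constancy across a pair of components. The other mildly fiddly point is handling the ``two vertices in the same component'' case for all three choices of which vertex is designated the midpoint of the geodesic.
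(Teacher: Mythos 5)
Your proof is correct; note that the paper itself states Theorem~\ref{thm:2.1} only as a cited result from~\cite{bijo-2019} and contains no proof of it, so there is no in-paper argument to compare against. Your route---translating ``three vertices on a common geodesic'' into the betweenness condition $d(x,y)+d(y,z)=d(x,z)$, then deriving completeness of the components (via a shortest path in $G[S]$), distance-constancy (via the adjacency-pins-distances-to-within-one observation, applied twice), and in-transitivity in the forward direction, with a clean three-case analysis for the converse---is sound, complete, and is essentially the standard argument given in the cited reference.
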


We will also make use of the following fact that was observed for the first time in the proof of~\cite[Theorem 5.1]{bijo-2019}. 

\begin{lemma}
{\rm\cite{bijo-2019}}
\label{lem:bipartite}
Let $G$ be a connected bipartite graph. If $S$ is a general position set of $G$ with $|S|\geq 3$, then $S$ is an independent set of $G$. In other words, if $S$ is not an independent set of $G$, then $S$ is a general position set if and only if $S$ consists of exactly two adjacent vertices.
\end{lemma} 

Let us first look at some examples. Since in a complete graph every vertex subset is a general position set, A wins the $\gp$-avoidance game on the complete graph $K_n$(and loses the $\gp$-achievement game) if and only if $n$ is even. Clearly A wins the $\gp$-avoidance game on  a graph $G$ with $\gp(G) = 2$. As proved in~\cite{ullas-2016}, the only graphs with  $\gp(G) = 2$ are paths and the cycle $C_4$. If $\gp(G) = 3$, then $\gp$-avoidance game will take either two or three moves. In fact, if $\gp(G) = 3$ then A wins the $\gp$-avoidance game if only only if every vertex of $G$ lies in a maximal general position set of order $3$. Applying this observation to cycles we infer that B wins the $\gp$-avoidance game on the cycle $C_{n}(n\ge 3)$ if and only if $n\neq 4$. On the other hand, it was observed in~\cite{klavzar-2021e} that B wins the $\gp$-achievement game on $C_n$ if and only if $n$ is even. Hence both the games are intrinsically different.

The following useful result for the $\gp$-avoidance game is a direct consequence of \cite[Theorem 3.1]{klavzar-2021e} which deals with the $\gp$-achievement game. 

\begin{theorem}
\label{thm:up-to-gp-set} 
Let $G$ be a graph. Then the following holds. 

(i) If A has a strategy such that after the vertex $a_k$, for some $k\ge 1$, is played, the set $\Pl_G(\ldots a_k)\cup \{a_1, b_1, \ldots, a_k\}$ is a general position set and $|\Pl_G(\ldots a_k)|$ is odd, then A wins the $\gp$-avoidance game. 

(ii) If B has a strategy such that after the vertex $b_k$, for some $k\ge 1$, is played, the set $\Pl_G(\ldots b_k)\cup \{a_1, b_1, \ldots, b_k\}$ is a general position set and $|\Pl_G(\ldots b_k)|$ is odd, then B wins the $\gp$-avoidance game. 
\end{theorem}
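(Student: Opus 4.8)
The plan is to show that as soon as the game reaches a position in which the already-played vertices together with \emph{all} currently playable vertices form a general position set, the length of the remaining game becomes completely determined: each further move simply deletes one vertex from the pool of playable vertices while leaving every other vertex of the pool playable. The number of remaining moves therefore equals the current size of the pool, and a parity count under the mis\`ere (last-to-move-loses) convention of the avoidance game decides the winner.

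First I would isolate a closure claim. Let $S$ be the set of vertices played so far and put $P=\Pl_G(S)$; suppose $S\cup P$ is a general position set. I claim that for every $y\in P$ one has $\Pl_G(S\cup\{y\})=P\setminus\{y\}$. The inclusion ``$\subseteq$'' is immediate: if $z$ is playable after $S\cup\{y\}$, then $S\cup\{z\}$ is a general position set (being a subset of one) and $z\ne y$, so $z\in P\setminus\{y\}$. For ``$\supseteq$'', if $z\in P\setminus\{y\}$ then $S\cup\{y,z\}\subseteq S\cup P$, and since a subset of a general position set is again a general position set, $S\cup\{y,z\}$ is a general position set, i.e.\ $z\in\Pl_G(S\cup\{y\})$.

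Next I would observe that the hypothesis is self-reproducing: after $y\in P$ is played, the new played set $S'=S\cup\{y\}$ satisfies $S'\cup\Pl_G(S')=S\cup P$, which is still a general position set, with $|\Pl_G(S')|=|P|-1$. Induction on $|P|$ then shows that, once such a position is reached, the game lasts exactly $|P|$ more moves irrespective of how either player plays. For part (i), by hypothesis A can force a position just after $a_k$ in which, writing $S=\{a_1,b_1,\ldots,a_k\}$ and $m=|\Pl_G(\ldots a_k)|$, the set $S\cup\Pl_G(\ldots a_k)$ is a general position set and $m$ is odd; the ensuing $m$ moves are cast as $B,A,B,\ldots$, so the $m$-th (last) move, $m$ being odd, is B's. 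As the player making the last move loses the avoidance game, B loses and A wins. Case (ii) is symmetric: the $m$ moves after $b_k$ run $A,B,A,\ldots$, and for odd $m$ the last move is A's, so A loses and B wins.

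The main point to get right is the closure claim, namely that reaching a configuration in which the played and the playable vertices jointly form a general position set ``freezes'' the combinatorics, so that no later choice can either create new playable vertices or prematurely destroy existing ones; this is exactly what renders the remaining game length independent of strategy. Once this deterministic behaviour is in place, only the parity bookkeeping and the mis\`ere convention remain, and both are routine.
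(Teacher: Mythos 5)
Your proof is correct. The paper in fact gives no explicit proof of this theorem (it is stated as ``parallel to'' the achievement-game result of \cite{klavzar-2021e}), and your argument --- the closure claim that once the played vertices together with all playable vertices form a general position set, each subsequent move removes exactly one vertex from the playable pool and leaves the rest playable, so the remaining game length is frozen at $|\Pl_G(S)|$ and the mis\`ere parity count decides the winner --- is precisely the argument the paper leaves implicit.
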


As an example of the application of Theorem~\ref{thm:up-to-gp-set}, consider the Petersen graph $P$ (see Figure \ref{fig1b}).
By the symmetry of $P$, we may assume that $a_1$ is an arbitrary vertex of $P$.
Consider first the case that $b_1$ is a neighbor of $a_1$, as illustrated in Figure \ref{fig1b}. Then it is easy to check that $\Pl_P(a_1,b_1)$ consists of four vertices. Moreover, $\Pl_P(a_1,b_1)\cup \{a_1, b_1\}$ is a general position set. Hence, after A plays any vertex $a_2$ from $\Pl_P(a_1,b_1)$, we have that $\Pl_P(a_1,b_1,a_2)\cup\{a_1,b_1,a_2\}$ is a general position set, since it is equal to $\Pl_P(a_1,b_1)\cup\{a_1,b_1\}$, and $|\Pl_P(a_1,b_1,a_2)|=3$ is odd. Thus, we can apply Theorem~\ref{thm:up-to-gp-set}(i) to conclude that A wins the $\gp$-avoidance game on $P$. Consider second the case that $b_1$ is a vertex at distance $2$ from $a_1$. Then A replies with $a_2$ which is a neighbor of $a_1$ but not a neighbor of $b_1$. Now it is easy to check that $\Pl_P(a_1,b_1,a_2)$ consists of three vertices and $\Pl_P(a_1,b_1,a_2)\cup\{a_1,b_1,a_2\}$ is a general position set, hence Theorem~\ref{thm:up-to-gp-set}(i) can be applied once more. (We remark in passing that B wins the $\gp$-achievement game on the Petersen graph.)

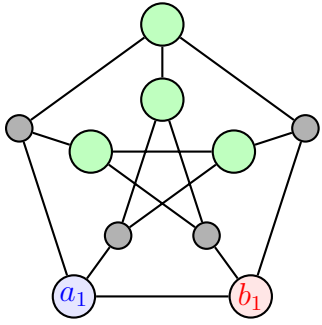
\begin{figure}[ht!]\centering\scalebox{1.0}{
\begin{tikzpicture}[style=thick]
\tikzstyle{vertex1}=[draw,circle,fill=black!30,minimum size=10pt,inner sep=0.1pt]
\tikzstyle{vertexA}=[draw,circle,fill=blue!10,minimum size=16pt,inner sep=0.1pt]
\tikzstyle{vertexB}=[draw,circle,fill=red!10,minimum size=16pt,inner sep=0.1pt]
\tikzstyle{vertex2}=[draw,circle,fill=green!25,minimum size=16pt,inner sep=0.1pt]
\node[vertex2](u1) at ( 18:1cm) {};
\node[vertex2](u2) at ( 90:1cm) {};
\node[vertex2](u3) at (162:1cm) {};
\node[vertex1](u4) at (234:1cm) {};
\node[vertex1](u5) at (306:1cm) {};
\node[vertex1](v1) at ( 18:2cm) {};
\node[vertex2](v2) at ( 90:2cm) {};
\node[vertex1](v3) at (162:2cm) {};
\node[vertexA](v4) at (234:2cm) {\textcolor{blue}{$a_1$}};
\node[vertexB](v5) at (306:2cm) {\textcolor{red}{$b_1$}};

\draw (u1)--(u4)--(u2)--(u5)--(u3)--(u1);
\draw (v1)--(v2)--(v3)--(v4)--(v5)--(v1);
\draw (u1)--(v1);
\draw (u2)--(v2);
\draw (u3)--(v3);
\draw (u4)--(v4);
\draw (u5)--(v5);
\end{tikzpicture}}
\caption{\label{fig1b}Example of Theorem \ref{thm:up-to-gp-set}(i) on the Petersen graph $P$, when Bob selects a neighbor $b_1$ of $a_1$. The set $\Pl_P(a_1,b_1)$ consists of the green vertices.}
\end{figure}

As another application of Theorem~\ref{thm:up-to-gp-set} we have the following result. 

\begin{proposition}
\label{prp:complete-bipartite}
Let $G$ be the complete multipartite graph $K_{n_1,\ldots, n_k}$, where $k\ge 2$ and $n_i\ge 2$ for $i\in [k]$. Then A wins the $\gp$-avoidance game on $G$ if and only if $k$ is even and at least one $n_i$ is even.  
\end{proposition}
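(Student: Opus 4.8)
The plan is to first pin down the structure of general position sets in $G=K_{n_1,\dots,n_k}$ and then reduce the entire game to a short parity computation. Write $V_1,\dots,V_k$ for the parts, so $|V_i|=n_i$. Two vertices in distinct parts are adjacent, while two vertices $u,v$ of a common part $V_i$ satisfy $d(u,v)=2$ and $I[u,v]=\{u,v\}\cup(V(G)\setminus V_i)$, since the common neighbors of $u,v$ are exactly the vertices outside $V_i$. A direct check (or Theorem~\ref{thm:2.1}, with Lemma~\ref{lem:bipartite} covering $k=2$) then gives the dichotomy: $S\subseteq V(G)$ is a general position set if and only if either (a) $S$ meets each part in at most one vertex, so $G[S]$ is a clique — a \emph{transversal} set — or (b) $S\subseteq V_i$ for a single part $V_i$. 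The two families are essentially incompatible: as soon as $S$ contains two vertices of some part $V_i$ together with any vertex outside $V_i$, that outside vertex lies in the interval of the two, so $S$ is not in general position.

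First I would use this dichotomy to show that the whole game is decided by the first two moves. By symmetry within each part, A's opening $a_1$ is determined up to isomorphism by the part $V_{i_1}$ containing it, and B then has exactly two kinds of reply:
\begin{enumerate}
\item[(L)] B plays $b_1\in V_{i_1}$ (\emph{locking}). Then $\{a_1,b_1\}\subseteq V_{i_1}$, and by the dichotomy every subsequent playable vertex must also lie in $V_{i_1}$; since any subset of $V_{i_1}$ is in general position, the remaining vertices of $V_{i_1}$ are played one by one until $V_{i_1}$ is exhausted, so the game lasts exactly $n_{i_1}$ moves.
\item[(T)] B plays $b_1\notin V_{i_1}$ (\emph{transversal}). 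Then $\{a_1,b_1\}$ meets two parts; by the dichotomy no part can ever be doubled while two parts are already occupied, whereas a vertex of any untouched part is always playable. Hence the transversal is extended one new part at a time until all $k$ parts are met, and the game lasts exactly $k$ moves.
\end{enumerate}
The structural claim to prove carefully here is that neither mode can switch into the other, via Lemma~\ref{lem:playable}: once two vertices share a part, play is confined to that part, and once two parts are occupied, no part can be doubled. Consequently the total length is forced to be $n_{i_1}$ in case (L) and $k$ in case (T), with \emph{no other possibilities}.

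With this rigidity in hand, the outcome follows from the parity rule that A wins the avoidance game exactly when the total number of moves is even (A moves on odd turns, so the last, losing, move falls to B iff the total is even). The decision tree is now tiny: A fixes $i_1$, and B then chooses between a game of length $n_{i_1}$ (option L) and one of length $k$ (option T), B preferring an odd total and A an even one. Thus, once $i_1$ is fixed, B can force an odd total precisely when $n_{i_1}$ or $k$ is odd, and A secures an even total for \emph{every} B-reply precisely when both $n_{i_1}$ and $k$ are even. Therefore A has a winning opening if and only if she can choose $i_1$ with $n_{i_1}$ even and $k$ is even, i.e. if and only if $k$ is even and at least one $n_i$ is even; otherwise B wins. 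For the A-wins direction one may package case (L) through Theorem~\ref{thm:up-to-gp-set}(i): when $n_{i_1}$ is even and $n_{i_1}\ge 4$, after A's forced reply $a_2\in V_{i_1}$ the set $\Pl_G(a_1,b_1,a_2)\cup\{a_1,b_1,a_2\}=V_{i_1}$ is a general position set of odd residual size $n_{i_1}-3$, giving an A-win directly, while $n_{i_1}=2$ is immediate and case (T) with $k$ even is handled by the length count above.

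The main obstacle I anticipate is not the parity arithmetic but the rigidity argument of the second paragraph: one must verify through Lemma~\ref{lem:playable} that there is no escape from either mode and that the forced lengths $n_{i_1}$ and $k$ are attained exactly, paying attention to the small boundary cases ($n_{i_1}=2$, and $k=2$, where options (L) and (T) may coincide in length). A secondary point to get right is the quantifier bookkeeping — the lock/transversal choice rests with B on move two, while only the part $i_1$, hence the parity of $n_{i_1}$, is under A's control — since it is exactly this interplay that forces the condition ``$k$ even \emph{and} some $n_i$ even'' to be both necessary and sufficient.
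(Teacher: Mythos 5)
Your proof is correct and follows essentially the same route as the paper's: the key facts in both are that a reply in the opener's part confines all further play to that part (game length $n_{i_1}$), while a reply in a different part forces a transversal with one vertex per part (game length $k$), after which the result is pure parity. Your write-up just makes the underlying dichotomy of general position sets and the length-rigidity explicit before the parity computation, whereas the paper interleaves these observations with its case analysis and invokes Theorem~\ref{thm:up-to-gp-set} to package the wins.
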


\proof Suppose first that $n_i$ is odd for all $i\in [k]$. Let $X$ be the partition set of $G$ in which the first move $a_1$ has been played by A. Then B replies by playing a vertex $b_1\ne a_1$ from $X$. Note that $\Pl_G(a_1, b_1) = X\setminus \{a_1, b_1\}$. Since $X$ is a general position set of $G$, Theorem~\ref{thm:up-to-gp-set}(ii) applies and B wins the $\gp$-avoidance game on $G$. 

Hence if at least one $n_i$ is even, A would definitely choose $a_1$ from that even partition set $X$ to win the game. Now, if B would reply by playing a vertex in $X$, then by the argument of the previous paragraph and with Theorem~\ref{thm:up-to-gp-set}(i) in hand, A would win. So it is better for B to play a vertex $b_1$ which lies in a partition set $Y\ne X$. Since $\Pl_G(a_1,b_1) = V(G)\setminus (X\cup Y)$,  the vertex $a_2$ must lie in a partition set $Z$ different from both $X$ and $Y$. Continuing in this manner, each of the subsequent played vertices belongs to its private partition set. In conclusion, if some $n_i$ is even, then A will win the $\gp$-avoidance game if and only if $k$ is even.
\qed

The {\it generalized wheel} $W_{n,m}$, $n\ge 1$, $m\ge 3$, is the graph obtained by joining
each vertex of the complement of the complete graph $K_n$ to every vertex of the cycle $C_m$. That is, $W_{n,m}$ is the join of $\overline{K_n}$ and $C_m$. In the following we resolve the $\gp$-avoidance game on $W_{n,m}$.

\begin{theorem}
\label{thm:4.2}
If $n\ge 1$ and $m\ge 3$, then B wins the $\gp$-avoidance game on $W_{n,m}$ if and only if $m\ge 4$. 
\end{theorem}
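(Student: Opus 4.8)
The plan is to first determine the general position structure of $W_{n,m}$ and then analyze the two parity regimes of the game directly from that structure. Write $\overline{K_n}=\{c_1,\dots,c_n\}$ for the \emph{apex} vertices and $C_m=v_1\cdots v_m$ for the \emph{cycle} vertices. The distances are immediate from the join: $d(c_i,c_j)=2$ (their common neighbours are all cycle vertices), $d(c_i,v_k)=1$, and $d(v_k,v_\ell)=1$ if $v_k,v_\ell$ are consecutive on $C_m$ and $2$ otherwise (using $n\ge 1$). From this and Lemma~\ref{lem:playable} (or Theorem~\ref{thm:2.1}) I would extract two structural facts. First, a general position set cannot contain two apex vertices together with any cycle vertex, because $d(c_i,c_j)=2=1+1=d(c_i,v_k)+d(v_k,c_j)$, so $v_k\in I[c_i,c_j]$. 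Second, if a general position set contains an apex vertex $c$ and two cycle vertices $v,v'$, then $v,v'$ must be consecutive on $C_m$, since otherwise $d(v,v')=2=d(v,c)+d(c,v')$ forces $c\in I[v,v']$. Hence for $m\ge 4$ every general position set meeting both sides has the shape $\{c\}\cup R$ with $R$ a clique of $C_m$, i.e. at most two consecutive cycle vertices, giving size at most $3$; moreover any general position set containing a cycle vertex has at most one apex vertex.

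For the case $m\ge 4$ I would give B the following response strategy: answer A's first move by a vertex of the opposite type, so that after $b_1$ the played set is $S=\{c,v\}$ with $c$ apex and $v$ cyclic (possible since $n\ge1$ and $m\ge4$). Using the two structural facts I would show $\Pl_{G}(c,v)$ is exactly the two cycle-neighbours of $v$: a second apex vertex is barred by the first fact, and a non-neighbour $v'$ of $v$ is barred because $c\in I[v,v']$. Since this set is nonempty, A is forced to play one of these two vertices, producing a triangle $\{c,v,v'\}$ which, again by the two facts, admits no extension and is therefore a maximal general position set. The game ends after exactly three moves, so B wins. I would flag here that the two playable vertices are mutually exclusive ($\{c,v,v',v''\}$ is never a general position set), so Theorem~\ref{thm:up-to-gp-set} cannot be invoked verbatim; the length bound comes purely from maximality.

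For the case $m=3$ the cycle is a triangle and an apex vertex together with the whole rim forms $K_4$, so I would let A simply open with any cycle vertex $v_1$. By the first structural fact, once a cycle vertex is on the board the set can never contain two apex vertices, hence at most one apex vertex is ever played. I would then argue that every maximal general position set that contains a cycle vertex and at most one apex vertex must equal $\{c\}\cup\{v_1,v_2,v_3\}=K_4$: a cycle-only set is always extendable by an apex vertex, and a set with one apex and a proper rim subset is extendable by the missing cycle vertex. Thus the game is forced to terminate at a copy of $K_4$ after exactly four moves, irrespective of all later choices, and A wins. Note this covers all $n\ge1$ since a single apex vertex suffices to complete the $K_4$.

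The hard part is the structural characterization rather than the game bookkeeping: one must recognize that the apex vertices behave as obligatory geodesic midpoints, so that a single apex vertex pins the accompanying cycle vertices into a clique. This is exactly what collapses the game to length $3$ when $m\ge4$ and to length $4$ when $m=3$. The accompanying subtlety, which I would be careful to address, is that the playable vertices in these positions are typically pairwise incompatible, so the parity shortcut of Theorem~\ref{thm:up-to-gp-set} does not apply; instead the outcome is controlled by proving that every reachable maximal general position set has a single fixed size.
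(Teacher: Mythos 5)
Your proof is correct and follows essentially the same route as the paper's: for $m\ge 4$, B answers A's opening move with a vertex of the opposite type, collapsing the game to a triangle (exactly three moves), and for $m=3$, A opens on the rim, forcing the game to fill out a $K_4$ (exactly four moves). The only difference is bookkeeping: where the paper cites Theorem~\ref{thm:2.1} to restrict the remaining game to a clique and Theorem~\ref{thm:up-to-gp-set}(i) for the parity count, you derive the same facts inline via interval computations and a fixed-terminal-size argument, which is a sound and slightly more self-contained packaging of the identical strategies.
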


\proof 
Suppose that $m=3$. Let $a_1$ be an arbitrary vertex of $C_3$. Assume first that B replies by playing an arbitrary vertex of $\overline{K_n}$. Then $\Pl_G(a_1,b_1) = V(C_3)\setminus \{a_1\}$ and $\Pl_G(a_1,b_1) \cup \{a_1, b_1\}$ is isomorphic to $K_4$ which is a general position set. Then A chooses a vertex of $\overline{K_n}$ leaving only one option to Bob (the last non-selected vertex of $\overline{K_n}$) and A wins. Assume second that $b_1$ is a vertex of $C_3$. Then A replies by playing a vertex of $\overline{K_n}$ and we are in the same situation as in the first case. Hence in any case A wins.  

Suppose now that $m\ge 4$. If $A$ starts with a vertex from $C_m$, then choose $b_1$ from $\overline{K_n}$. Thus by Theorem \ref{thm:2.1}, the remaining game is restricted to a clique in $W_{n,m}$. As $m\ge 4$, each maximal clique of $W_{n,m}$ has order $3$, hence B wins the game. And if $A$ starts with a vertex of $K_m$, then by choosing a vertex from $C_m$, B again wins the game.
\qed

This is in contrast with the fact that solving $\gp$-achievement game on $W_{n,m}$ appears to be difficult. This is because, in the $\gp$-achievement game, if $n$ is even, A plays $a_1$ in $C_m$ (otherwise $B$ wins by playing only in $\overline{K_n}$) and consequently B also plays $b_1$ in $C_m$ (otherwise A wins immediately by playing in $C_m$) and no other vertex of $\overline{K_m}$ can be selected. In this case, the $\gp$-achievement game on $C_m$ reduces to the game of selecting vertices in such a way that no three selected vertices induce a $P_3$, which seems to be hard even in cycles.

\section{Both games are PSPACE-complete}
\label{sec:complexity}

In this section, we prove that the general position achievement and avoidance games are PSPACE-complete. We first show that they are in PSPACE.
Here we consider the games as decision problems: given a graph, does player A have a winning strategy?

\begin{lemma}\label{lem-pspace}
The $\gp$-achievement and $\gp$-avoidance games are in PSPACE.
\end{lemma}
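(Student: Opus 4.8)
The plan is to show membership in PSPACE by giving a straightforward recursive algorithm that evaluates the game tree while using only polynomial space. Both games are finite, two-player, perfect-information games without draws, so each position has a well-defined winner under optimal play. A game position is fully described by the set $S\subseteq V(G)$ of vertices played so far (together with whose turn it is, which is determined by the parity of $|S|$), and a move consists of selecting a playable vertex, i.e.\ a vertex of $\Pl_G(S)$. Since $|S|\le |V(G)|$, a position is encoded in polynomial space, and checking whether a candidate vertex $y$ is playable can be done in polynomial time using Lemma~\ref{lem:playable} (testing conditions (i) and (ii) against the current set $S$). This is the engine on which the recursion runs.

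The core of the argument is a recursive procedure $\textsc{Win}(S)$ that decides whether the player to move from position $S$ has a winning strategy. First I would enumerate the playable vertices $y\in \Pl_G(S)$; if there are none, the current player cannot move, so the outcome is determined by which game we are in and the parity of $|S|$ (in $\gp$-achievement the player unable to move loses, whereas in $\gp$-avoidance the player who made the last move loses, i.e.\ being unable to move is a win). Otherwise, for each playable $y$ the procedure recurses on $S\cup\{y\}$, and the current player wins if and only if some move leads to a position from which the opponent loses. This is the standard minimax/alternating evaluation of the game tree.

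The key point for the space bound is that this recursion explores the game tree depth-first rather than storing it. The recursion depth is at most $|V(G)|$, since each move strictly enlarges $S$ by one vertex. At each level of the recursion we store only the current set $S$, a pointer into the enumeration of candidate vertices, and the boolean result accumulated so far, all of which is polynomial in $|V(G)|$. Hence the total space used along any root-to-leaf branch is $O(|V(G)|)$ frames each of polynomial size, giving polynomial space overall. The computation time may be exponential, but that is irrelevant for PSPACE membership.

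The main technical point to verify carefully is the base case, namely that the parity bookkeeping correctly reflects which player is stuck and that the winning condition is read off correctly for each of the two games; this is where the $\gp$-achievement and $\gp$-avoidance games differ and where a sign error would be easy to make. Everything else is routine: correctness of the recursion follows from the Zermelo--von Neumann determinacy already invoked in the introduction, playability is decidable in polynomial time by Lemma~\ref{lem:playable}, and the depth-first evaluation of an alternating game tree with polynomial depth and polynomially sized positions is the textbook way to place such games in PSPACE.
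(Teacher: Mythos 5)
Your proof is correct and is in substance the same argument as the paper's: the paper simply notes that the games are polynomially bounded two-player games (at most $n=|V(G)|$ turns, at most $n$ moves per turn) and cites the general result of Hearn and Demaine that such games lie in PSPACE, whereas you inline the standard proof of that general fact, namely depth-first minimax evaluation of the game tree with recursion depth at most $|V(G)|$, polynomial-size stack frames, and polynomial-time legality checks via Lemma~\ref{lem:playable}. The only material you add beyond the paper is making explicit what the citation encapsulates — in particular that playability is polynomial-time decidable and that the normal versus mis\`ere base cases are read off correctly by parity — so there is no gap and no genuinely different route.
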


\begin{proof}
Since the number of turns is at most~$n=|V(G)|$ and, in each turn, the number of possible moves is at most~$n$ (there are at most~$n$ vertices to select), we have that the $\gp$-achievement game and the $\gp$-avoidance game are polynomially bounded two player games, which implies that they are in PSPACE~\cite{demaine-2009}.
\qed
\end{proof}

Now we obtain reductions from the clique-forming game, which is PSPACE-complete \cite{schaefer-1978}. In this game, given a graph $G$, two players 1 and 2 select alternately vertices and the subset of the chosen vertices must induce a clique. The first player unable to play loses the game. That is, the player who has played the last vertex of a maximal clique wins.  
This game is strongly related to the classical Node Kayles game, that is also PSPACE-complete \cite{schaefer-1978}, in which the objective is to obtain an independent set, instead of a clique.
The clique-forming game is the Node Kayles game played on the complement of the graph, and vice-versa.

\begin{theorem}\label{teo-pspace1}
The $\gp$-achievement game is PSPACE-complete even on graphs with diameter at most $4$.
\end{theorem}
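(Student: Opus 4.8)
The plan is to reduce from the clique-forming game (equivalently, Node Kayles on the complement), which is PSPACE-complete, to the $\gp$-achievement game, while controlling the diameter of the constructed graph. Given an instance $H$ of the clique-forming game, I would build a graph $G$ in which playing vertices so that the selected set forms a clique corresponds exactly to playing legal moves in the $\gp$-achievement game, so that A wins one game iff A wins the other. The natural first attempt is to take $G$ to have a vertex for each vertex of $H$ and to arrange distances so that a set of original vertices is a general position set of $G$ precisely when it induces a clique in $H$. By Theorem~\ref{thm:2.1}, a general position set whose induced subgraph is a single complete component automatically satisfies the in-transitive distance-constant condition (a single part is vacuously in-transitive), so the cleanest target is to make the ``clique moves'' in $H$ sit inside a common clique of $G$, while every non-adjacent pair in $H$ is realized as a triple lying on a common geodesic in $G$, hence forbidden.

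The key construction step is therefore to attach gadgets that create, for each non-edge of $H$, a forced geodesic witness. Concretely I would add a small number of auxiliary (non-playable or ``dead'') vertices so that for every pair $u,v$ that is a non-edge of $H$ there is a third vertex $w$ with $u,v \in I[w,\cdot]$ or $u \in I[v,w]$, making $\{u,v\}$ never extendable in general position in the way the game requires; simultaneously, for every edge of $H$ the corresponding vertices must remain pairwise at a distance that keeps them in general position. The auxiliary vertices should be rigged (e.g.\ by making them close to everything, via a dominating apex or a small universal set) so that no player ever benefits from, or is able to, play them as legal general-position moves — using Lemma~\ref{lem:playable} to check that such vertices violate condition (i) or (ii) immediately. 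The diameter bound comes from adding one or two apex-like vertices (or a short ``backbone'') that put every pair of vertices within distance $4$; the challenge is to do this without accidentally creating new geodesics that either destroy the general position status of intended clique-moves or fail to forbid an intended non-move.

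Once the correspondence between legal moves is established, I would argue that the two games are isomorphic as game trees: a sequence of moves is legal and maximal in the $\gp$-achievement game on $G$ if and only if the corresponding sequence forms a maximal clique in $H$ in the clique-forming game. Since both games use the normal convention (last to move wins) and have identical move sets at every position, the winner is the same, giving the reduction. Membership in PSPACE is already handled by Lemma~\ref{lem-pspace}, so combining this with the reduction yields PSPACE-completeness, and the diameter-$4$ control yields the stronger statement.

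I expect the main obstacle to be the \emph{simultaneous} enforcement of the two opposing requirements under the diameter constraint: the gadget must forbid exactly the non-edges of $H$ (no more, no less) by creating geodesic witnesses, yet the distance-shrinking apparatus needed to force $\diam(G)\le 4$ tends to collapse distances and thereby either merge the forbidding geodesics or destroy the ones that are supposed to forbid a non-move. The delicate verification, via Theorem~\ref{thm:2.1} and Lemma~\ref{lem:playable}, that the auxiliary vertices are genuinely unplayable throughout the game (so they cannot be used as spoiler moves that change parity) is where the argument must be most careful, and is likely the heart of the proof.
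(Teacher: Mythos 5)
Your high-level plan---reduce from the clique-forming game, and use an apex/universal vertex both to turn non-edges of $H$ into forbidden triples (via the geodesics $v_i$--$u$--$v_j$) and to keep the diameter small---is exactly the paper's starting point. But the mechanism you rely on to make it work, auxiliary vertices that are ``genuinely unplayable throughout the game,'' cannot exist. By the definition of general position, every singleton and every pair of vertices is a general position set, so \emph{every} vertex of $G$ is a legal move on the first turn and on the second; moreover, a universal vertex $u$ (your ``dominating apex'') remains legal whenever the selected vertices form a clique of $H$, since together with $u$ they form a clique of $G$, which is in general position by Theorem~\ref{thm:2.1}. Hence your auxiliary vertices are unavoidably available as spoiler moves, the claimed game-tree isomorphism between the $\gp$-achievement game on $G$ and the clique game on $H$ breaks down, and---equally important---you lose control of parity: you never say whether A is to correspond to Player~1 or Player~2 of the clique game, yet a single playable auxiliary move flips exactly this correspondence.

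The paper closes this gap with an idea absent from your sketch: attach to each $v_i$ a pendant ``friend'' $f_i$ whose only neighbor is $v_i$. These pendants do two jobs. First, they \emph{force} A's opening move to be the universal vertex $u$: if A opens with $v_i$ (resp.\ $f_i$), then B replies $f_i$ (resp.\ $v_i$), and since every geodesic from $f_i$ to any $w\neq v_i$ passes through $v_i$, no third vertex is playable, so B makes the last move and wins the achievement game. Thus, instead of trying to make the apex unplayable, the paper makes it the unique non-losing first move, which pins down the parity: A becomes the \emph{second} player of the clique game on $H$, and the reduction states that A wins on $G$ if and only if the second player wins the clique-forming game on $H$. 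Second, after $u$ is played, $f_i$ behaves as a harmless duplicate of $v_i$ (it blocks, and is blocked by, exactly the same future moves, since $u$--$v_i$--$f_i$ and $f_i$--$v_i$--$u$--$v_j$--$f_j$ are geodesics), so pendant moves never act as spoilers; mapping each played $v_i$ or $f_i$ to the clique vertex $v_i$ yields the correspondence you wanted. Since your proposal explicitly defers the gadget construction and flags the unplayability verification as ``the heart of the proof,'' and that verification can never succeed, the pendant-vertex trick (or some equivalent device that forces a known extra move) is the genuinely missing ingredient.
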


\begin{proof}
From Lemma \ref{lem-pspace}, the $\gp$-achievement game is in PSPACE.
Now we obtain a reduction from the clique-forming game. Let $H$ be an instance of clique-forming game with vertex set $V(H)=\{v_1,\ldots,v_n\}$. We will construct a graph $G$ such that player A has a winning strategy in the $\gp$-achievement game on $G$ if and only if the second player of the clique-forming game on $H$ has a winning strategy.

Let $G$ be the graph obtained from $H$ by adding a new vertex $u$ adjacent to all vertices of $H$ and a new vertex $f_i$ (called ``friend'' of $v_i$), for every vertex $v_i$ of $H$, whose only neighbor is $v_i$.
Notice that $G$ has diameter at most 4. See Figure \ref{fig2} for an example.

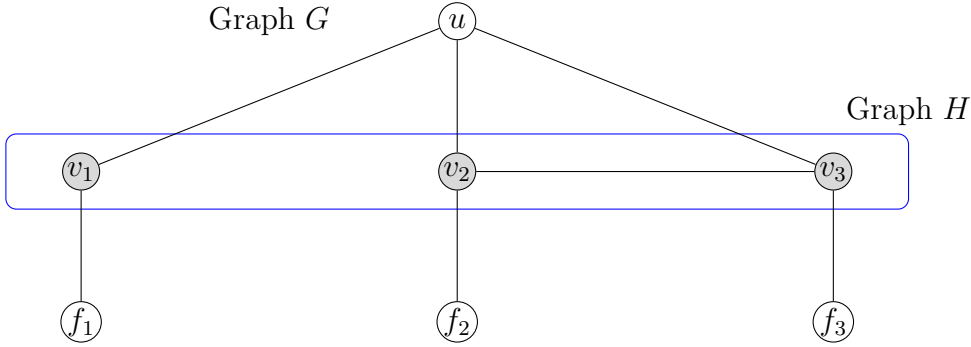
\begin{figure}[ht!]\centering\scalebox{1.0}{
\begin{tikzpicture}
\tikzstyle{vertex}=[draw,circle,fill=white!25,minimum size=14pt,inner sep=.1pt]
\tikzstyle{vertex2}=[draw,circle,fill=black!15,minimum size=14pt,inner sep=0.1pt]

\draw[rounded corners,blue] (1.0,1.5) rectangle (13,2.5);
\node at (13,2.8) {Graph $H$};
\node at (4.5,4) {Graph $G$};

\node[vertex](u) at (7,4) {$u$};

\node[vertex2](v1) at (2,2) {$v_1$};
\node[vertex2](v2) at (7,2) {$v_2$};
\node[vertex2](v3) at(12,2) {$v_3$};

\node[vertex](f1) at (2,0) {$f_1$};
\node[vertex](f2) at (7,0) {$f_2$};
\node[vertex](f3) at(12,0) {$f_3$};

\draw (u)--(v1)(u)--(v2)(u)--(v3);
\draw (v1)--(f1)(v2)--(f2)(v3)--(f3);
\draw (v2)--(v3);

\end{tikzpicture}}
\caption{\label{fig2}Example of the reduction from the clique-forming game to the $\gp$-achievement game, where $H$ is the graph with vertices $v_1$, $v_2$ and $v_3$. Player A has a winning strategy in the $\gp$-achievement game on $G$ if and only if Player 2 has a winning strategy in the clique-forming game on $H$.}
\end{figure}

If player A selects a vertex $v_i$ of $H$ in the first move of the $\gp$-achievement game played on $G$, then player B wins immediately by selecting the vertex $f_i$: notice that the shortest path between $f_i$ and a vertex other than $v_i$ passes through $v_i$. From the same argument, if player A selects a vertex $f_i$, where $v_i\in V(H)$, in the first move, then player B wins immediately by selecting the vertex $v_i$. So let us assume that player A selects the vertex $u$ in the first move.

From now on, notice that players A and B cannot select two non-adjacent vertices $v_i$ and $v_j$ of $H$, since $v_i-u-v_j$ is a shortest path. Moreover, during the $\gp$-achievement game, they cannot select two vertices $f_i$ and $f_j$ such that $v_i$ and $v_j$ are non-adjacent, since $f_i-v_i-u-v_j-f_j$ is a shortest path. They cannot select two vertices $v_i$ and $f_j$ such that $v_i$ and $v_j$ are non-adjacent, since $v_i-u-v_j-f_j$ is a shortest path.
Finally, the players cannot select two vertices $v_i$ and $f_i$, since $u-v_i-f_i$ is a shortest path.

Let $C$ be the vertex subset obtained from the selected vertices after the $\gp$-achievement game by the following: for every played vertex $v_i$ of $H$, put $v_i$ on $C$ and, for every played vertex $f_i$, put $v_i$ on $C$.
From the above, we have that $C$ is a clique of $H$ and we may assume that the players only select vertices from $H$ after the first move (on $u$), since playing on the $f_i$'s is essentially the same as playing on the corresponding vertices of $H$.

Thus, if the second player of the clique-forming game on $H$ has a winning strategy,  player A has a winning strategy in the $\gp$-achievement game on $G$, since player A will be the second to play on $H$ (recall that $u$ was the first chosen vertex). Moreover, if the first player of the clique-forming game on $H$ has a winning strategy, player B has a winning strategy in the $\gp$-achievement game on $G$, since player B will be the first to play on $H$.
\qed
\end{proof}

In the following, we prove that deciding who wins the $\gp$-avoidance game is PSPACE-complete. We obtain a reduction from the \textit{mis\`ere} clique-forming game, which is the same as the clique-forming game, but the first player unable to play wins the game.
As before, the \textit{mis\`ere} clique-forming game is strongly related to the \textit{mis\`ere} Node Kayles game, whose objective is to obtain an independent set and the first player unable to play wins the game.
Although these games are well known and their normal versions were proved PSPACE-complete in 1978, to the best of our knowledge the hardness of these \textit{mis\`ere} games were not proved before.
In this paper, we prove that both \textit{mis\`ere} games are PSPACE-complete.
We postpone the proof of this to Theorem \ref{teo-pspace3}.
In the next theorem, we use this result to prove that the $\gp$-avoidance game is PSPACE-complete.

\begin{theorem}\label{teo-pspace2}
The $\gp$-avoidance game is PSPACE-complete even in graphs with diameter at most $4$.
\end{theorem}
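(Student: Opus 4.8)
The plan is to mirror the reduction used for the $\gp$-achievement game in Theorem \ref{teo-pspace1}, but to reduce from the \emph{misère} clique-forming game (equivalently, the misère Node Kayles game), whose PSPACE-hardness will be established separately in Theorem \ref{teo-pspace3}. First I would invoke Lemma \ref{lem-pspace} to place $\gp$-avoidance in PSPACE, so only hardness remains. Given an instance $H$ of the misère clique-forming game with $V(H)=\{v_1,\dots,v_n\}$, I would construct exactly the same graph $G$ as in the proof of Theorem \ref{teo-pspace1}: add a universal vertex $u$ adjacent to all of $V(H)$, and attach to each $v_i$ a pendant ``friend'' $f_i$. As before, $G$ has diameter at most $4$.

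Next I would analyze the opening moves. The key point is that in the \emph{avoidance} game the last player to move \emph{loses}, so the roles of the two players relative to the auxiliary game are swapped compared with the achievement case. I would first argue that A is forced to open with $u$: if A instead plays some $v_i$, then B can respond with the friend $f_i$ (since $I[f_i,w]$ passes through $v_i$ for every $w\neq v_i$, so $f_i$ is playable only together with $v_i$), and symmetrically if A plays some $f_i$, B responds with $v_i$; in either case the set $\{v_i,f_i\}$ together with $u$ is already a maximal general position set, forcing A to be the last to move and hence lose the avoidance game. So A plays $a_1=u$.

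After $u$ is played, I would reuse verbatim the structural observations from Theorem \ref{teo-pspace1}: no two selected vertices among the $v_i,f_j$ can correspond to non-adjacent $v_i,v_j$ (because $v_i$--$u$--$v_j$, $f_i$--$v_i$--$u$--$v_j$--$f_j$, and $v_i$--$u$--$v_j$--$f_j$ are all geodesics), and $v_i,f_i$ cannot coexist (since $u$--$v_i$--$f_i$ is a geodesic). Hence the set $C$ obtained by recording $v_i$ for each played $v_i$ or $f_i$ is a clique of $H$, and conversely every legal continuation corresponds to building a clique. The remaining game after $a_1=u$ is therefore isomorphic to the clique-forming game on $H$, with B moving first on $H$. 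Since the avoidance game is misère (last to move loses) and the misère clique-forming game is likewise misère, the correspondence is faithful under this swap: A wins $\gp$-avoidance on $G$ if and only if the \emph{second} player wins the misère clique-forming game on $H$. Invoking Theorem \ref{teo-pspace3} for the hardness of the latter completes the reduction.

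The main obstacle I anticipate is not the structural clique correspondence—that is inherited from Theorem \ref{teo-pspace1}—but rather pinning down the parity/last-move bookkeeping so that the misère winning condition on $G$ translates cleanly to the misère winning condition on $H$. In particular one must verify that the ``opening $u$'' move neither adds nor removes a move from $H$'s perspective in a way that flips who is forced to make the terminal (losing) move, and that the forced-loss gadget in the opening case is genuinely losing in the \emph{avoidance} sense (A being compelled to take the last vertex). Care is needed here precisely because the pendant gadget that is a \emph{winning} immediate reply for B in the achievement game becomes a move that leaves A stuck as the last player in the avoidance game; I would make this parity argument explicit rather than merely asserting symmetry with Theorem \ref{teo-pspace1}.
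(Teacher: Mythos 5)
Your reduction breaks at the opening analysis, and the error is fatal to the construction itself, not just to its write-up. In the avoidance game the \emph{last} player to move loses. If A opens with $v_i$ and B replies with the friend $f_i$, then $\{v_i,f_i\}$ is already a \emph{maximal} general position set of $G$: in particular $u$ is not playable afterwards, because $v_i\in I[u,f_i]$ (the path $u-v_i-f_i$ is a geodesic), so your claim that $\{v_i,f_i,u\}$ is a general position set is false. Hence the game ends with B's reply, B is the last to move, and it is \emph{B} who loses. The pendant gadget, which punishes A's non-$u$ openings in the achievement game, punishes B in the avoidance game; consequently A is not forced to open with $u$, and an unplayed $u$ survives as a one-time ``pass'' move that can flip the mis\`ere parity. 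This makes the construction unsound, not merely under-argued. Concrete counterexample: let $H$ be the paw, i.e.\ a triangle $v_1v_2v_3$ plus a pendant vertex $v_4$ adjacent to $v_1$. The first player wins the mis\`ere clique-forming game on $H$ by opening $v_4$: the reply $v_1$ is forced, after which the first player has no move and therefore wins. By your claimed equivalence, B should then win the $\gp$-avoidance game on the associated graph $G$. But A wins $G$ by opening $v_2$: one checks that every maximal general position set of $G$ containing $v_2$ has even order (it is either $\{v_2,f_2\}$, or an order-$4$ set such as $\{u,v_2,r_1,r_3\}$ or $\{v_2,f_1,r_3,r_4\}$ with $r_j\in\{v_j,f_j\}$), and the game ends exactly when the played set is a maximal general position set, so B always makes the last move and loses.

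This is precisely why the paper replaces the pendant friend by a $C_5$ gadget $F_i=\{a_i,b_i,c_i,d_i,e_i\}$ completely joined to $v_i$, which repairs both defects at once. First, the punishment of a non-$u$ opening then has the correct parity: if A opens $v_i$, B replies $b_i$, after which the only playable vertices are $a_i$ and $c_i$; A must take one of them and the game ends on A's (third) move, so A loses, and symmetrically if A opens inside $F_i$. This genuinely forces $a_1=u$, after which the clique correspondence is faithful and no pass remains. Second, any spare moves inside a gadget during the main phase come in mirrorable pairs: if a player takes a second vertex of some $F_i$, the opponent can take a third (for any two vertices of the $C_5$ there is a third in general position with them), and no fourth exists, so parity cannot be hijacked --- whereas your single hoarded vertex $u$ is an unmatched parity-flipping move. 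Your membership argument via Lemma~\ref{lem-pspace}, the geodesic observations, and the appeal to Theorem~\ref{teo-pspace3} are all fine; what must change is the gadget.
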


\begin{proof}
From Lemma \ref{lem-pspace}, the $\gp$-avoidance game is in PSPACE.
Now we obtain a reduction from the \textit{mis\`ere} clique-forming game, the decision version of which is proved to be PSPACE-complete in upcoming Theorem \ref{teo-pspace3}. Let $H$ be an instance of the \textit{mis\`ere} clique-forming game with vertex set $V(H)=\{v_1,\ldots,v_n\}$. We will construct a graph $G$ such that player A has a winning strategy in the $\gp$-avoidance game on $G$ if and only if the second player of the \textit{mis\`ere} clique-forming game on $H$ has a winning strategy.

Let $G$ be the graph obtained from $H$ by adding a new vertex $u$ adjacent to all vertices of $H$ and, for every vertex $v_i$ of $H$, by adding a $C_5$ with vertices $F_i=\{p_i,q_i,r_i,s_i,t_i\}$, all of them adjacent to $v_i$. Notice that $G$ has diameter at most 4. See Figure \ref{fig3} for an example.

\begin{figure}[ht!]\centering\scalebox{1.0}{
\begin{tikzpicture}
\tikzstyle{vertex}=[draw,circle,fill=white!25,minimum size=14pt,inner sep=0.1pt]
\tikzstyle{vertex2}=[draw,circle,fill=black!15,minimum size=14pt,inner sep=0.1pt]

\draw[rounded corners,blue] (1.0,1.5) rectangle (13,2.5);
\node at (13,2.8) {Graph $H$};
\node at (4.5,4) {Graph $G$};

\node[vertex](u) at (7,4) {$u$};

\node[vertex2](v1) at (2,2) {$v_1$};
\node[vertex2](v2) at (7,2) {$v_2$};
\node[vertex2](v3) at(12,2) {$v_3$};

\node[vertex](a1) at (0,0) {$p_1$};
\node[vertex](b1) at (1,0) {$q_1$};
\node[vertex](c1) at (2,0) {$r_1$};
\node[vertex](d1) at (3,0) {$s_1$};
\node[vertex](e1) at (4,0) {$t_1$};

\node[vertex](a2) at (5,0) {$p_2$};
\node[vertex](b2) at (6,0) {$q_2$};
\node[vertex](c2) at (7,0) {$r_2$};
\node[vertex](d2) at (8,0) {$s_2$};
\node[vertex](e2) at (9,0) {$t_2$};

\node[vertex](a3) at(10,0) {$p_3$};
\node[vertex](b3) at(11,0) {$q_3$};
\node[vertex](c3) at(12,0) {$r_3$};
\node[vertex](d3) at(13,0) {$s_3$};
\node[vertex](e3) at(14,0) {$t_3$};

\draw (u)--(v1)(u)--(v2)(u)--(v3);
\draw (v1)--(a1)(v2)--(a2)(v3)--(a3);
\draw (v1)--(b1)(v2)--(b2)(v3)--(b3);
\draw (v1)--(c1)(v2)--(c2)(v3)--(c3);
\draw (v1)--(d1)(v2)--(d2)(v3)--(d3);
\draw (v1)--(e1)(v2)--(e2)(v3)--(e3);
\draw (v2)--(v3);
\draw (a1)--(b1)--(c1)--(d1)--(e1);
\draw (a2)--(b2)--(c2)--(d2)--(e2);
\draw (a3)--(b3)--(c3)--(d3)--(e3);
\path[-](a1) edge[out=-30, in=210] (e1);
\path[-](a2) edge[out=-30, in=210] (e2);
\path[-](a3) edge[out=-30, in=210] (e3);

\end{tikzpicture}}
\caption{\label{fig3}Example of the reduction from the \textit{mis\`ere} clique-forming game to the $\gp$-avoidance game, where $H$ is the graph with vertices $v_1$, $v_2$ and $v_3$. Player A has a winning strategy in the $\gp$-avoidance game on $G$ if and only if Player 2 has a winning strategy in the clique-forming game on $H$.}
\end{figure}
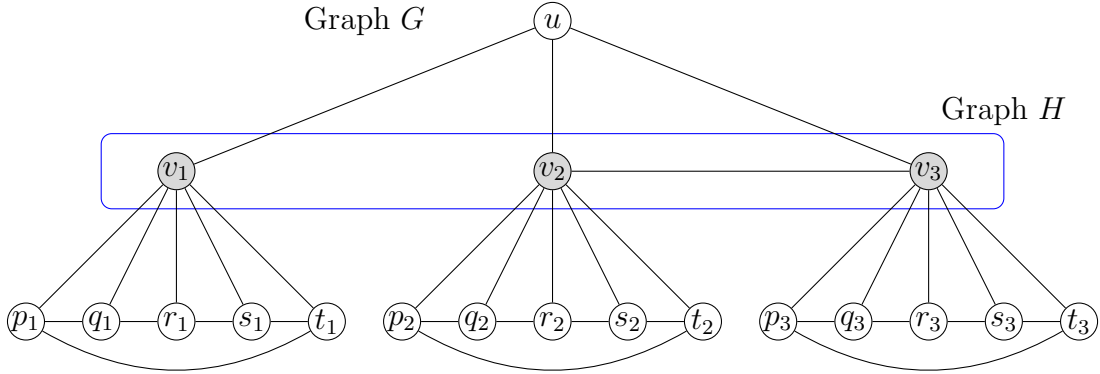

If player A selects a vertex $v_i$ of $H$ in the first move of the $\gp$-avoidance game played on $G$, then player B wins by selecting the vertex $q_i$: notice that, for every vertex $w\not\in\{v_i,p_i,r_i\}$, there is a shortest path between $w$ and $q_i$ passing through $v_i$, and then player A must select $p_i$ or $r_i$, being the last to move and losing the game. From the same argument, if player A selects a vertex of $F_i$, for some $v_i\in V(H)$, in the first move, then player B wins by selecting the vertex $v_i$. So we may assume that player A selects the vertex $u$ in the first move.

From now on, notice that players A and B cannot select two non-adjacent vertices $v_i$ and $v_j$ of $H$, since $v_i-u-v_j$ is a shortest path. Also, during the $\gp$-avoidance game, they cannot select a vertex $f_i\in F_i$ and a vertex $f_j\in F_j$ such that $v_i$ and $v_j$ are non-adjacent, since $f_i-v_i-u-v_j-f_j$ is a shortest path. They cannot select two vertices $v_i$ and $f_j\in F_j$ such that $v_i$ and $v_j$ are non-adjacent, since $v_i-u-v_j-f_j$ is a shortest path. Moreover, the players cannot select two vertices $v_i$ and $f_i\in F_i$, since $u-v_i-f_i$ is a shortest path.

Finally, if exactly one vertex of $F_i$ was already selected by one of the players and player A selects a second vertex of $F_i$, then player B can select a third vertex of $F_i$, forcing player A to select a vertex outside $F_i$ in the next move.
This is because $F_i$ induces the cycle $C_5$ and all maximal general position sets of $C_n$ with $n\geq 5$ are of order 3.
The same occurs if player B selects a second vertex of $F_i$.
For example, if $p_i$ was already selected and player A chooses vertex $q_i$, player B can select the vertex $s_i$.
In other words, playing twice in the same set $F_i$ will not change the outcome of the game.

Let $C$ be the vertex subset obtained from the selected vertices after the $\gp$-avoidance game by the following: for every played vertex $v_i$ of $H$, put $v_i$ on $C$ and, for every played vertex $f_i\in F_i$, put $v_i$ on $C$.
From the above, we have that $C$ is a clique of $H$ and we may assume that the players only select vertices from $H$ after the first move (on $u$), since playing on the sets $F_i$'s is essentially the same as playing on the corresponding vertices of $H$ (recall that playing twice in the same set $F_i$ will not change the outcome of the game).

Thus, if the second player of the \textit{mis\`ere} clique-forming game on $H$ has a winning strategy, player A has a winning strategy in the $\gp$-avoidance game on $G$, since player A will be the second to play on $H$ (recall that $u$ was the first chosen vertex). Moreover, if the first player of the \textit{mis\`ere} clique-forming game on $H$ has a winning strategy, player B has a winning strategy in the $\gp$-avoidance game on $G$, since player B will be the first to play on $H$.
\qed
\end{proof}

Finally, we prove the PSPACE-hardness of the \textit{mis\`ere} clique-forming game and the \textit{mis\`ere} Node Kayles game.
The complexity of computational games defined a long time ago has been the subject of recent papers (see for example \cite{rudini-2020,rudini-2022}).
We first reproduce in Theorem \ref{teo-pspace-1978} the 1978 PSPACE-hardness proof of the normal Node Kayles game \cite{schaefer-1978}, since our proof in Theorem \ref{teo-pspace3} is strongly based on it.

\begin{theorem}\label{teo-pspace-1978}[Schaefer, 1978]
The normal Node Kayles game is PSPACE-complete.
\end{theorem}

\begin{proof}
The reduction is from the TQBF problem, which takes as an instance a totally quantified CNF formula $\Phi$ with $n$ variables $x_1,\ldots,x_n$ and $m$ clauses $B_1,\ldots,B_m$, where the quantifiers are alternately $\exists$ and $\forall$ starting from $x_n$ to $x_1$. 
This can be seen as a game in which Player 1 and Player 2 alternately set true or false to the variables $x_n,x_{n-1},\ldots,x_1$ in this order. Player 1 wins if the formula is true at the the end; otherwise, Player 2 wins.

Consider the example $\Phi=\exists x_3\ \forall x_2\ \exists x_1:\ B_1\wedge B_2\wedge B_3$, with the following clauses: $B_1=(x_1\vee\overline{x_1})$, $B_2=(x_1\vee\overline{x_2})$ and $B_3=(\overline{x_1}\vee x_2\vee x_3)$. In this example, Player 1 wins, since Player 1 sets true to the variable $x_3$ in the first turn (in order to satisfy clause $B_3$), Player 2 then sets true to the variable $x_2$ (trying to make $B_2$ unsatisfied) and Player 1 must set true to variable $x_1$, satisfying all clauses and winning the game.

Schaefer \cite{schaefer-1978} assumes that the number $n$ of variables is odd (otherwise we can add a new variable which does not appear in any clause) and the first clause $B_1$ is $(x_1\vee\overline{x_1})$, which is always satisfied and does not change the outcome of the game. Given a TQBF formula $\Phi$, the constructed graph $G=G(\Phi)$ is described below.

For $i\in [n]$, create in $G$ the vertices $x_i$ and $\overline{x_i}$ associated to the variable $x_i$. For $k\in [m]$, create the vertex $x_{0,k}$ associated to the clause $B_k$. For $i\in [n]$ and $j\in \{0,\ldots,i-1\}$, create the vertex $y_{i,j}$. Let $X_i$ be the set with all the vertices $x_i$, $\overline{x_i}$ and $y_{i,j}$, and make $X_i$ be a clique in $G$. Let $X_0$ be the set with the vertices $x_{0,1},\ldots,x_{0,m}$ associated to the clauses, and make $X_0$ be a clique in $G$.

If the literal $x_i$ is in the clause $B_k$, create the edge $x_ix_{0,k}$. If the literal $\overline{x_i}$ is in the clause $B_k$, create the edge $\overline{x_i}x_{0,k}$. For $i\in [n]$ and $j\in \{0,\ldots,i-1\}$, create the edge $y_{i,j}w$ for every vertex $w\in (X_0\cup\cdots\cup X_{i-1})-X_j$. This ends the reduction of \cite{schaefer-1978}.
See Figure \ref{fig78} for the formula $\Phi$ mentioned above, which is the same example as in \cite{schaefer-1978}.

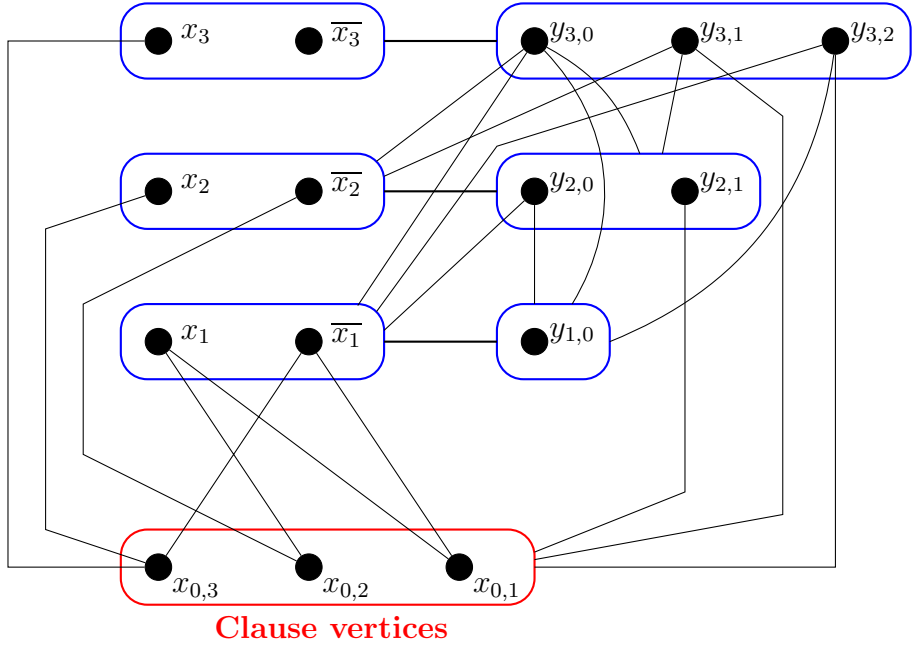
\begin{figure}[ht!]\centering\scalebox{1.0}{
\begin{tikzpicture}
\tikzstyle{vertex}=[draw,circle,fill=black!100,minimum size=10pt,inner sep=.1pt]


\draw[rounded corners=10pt,blue,thick] (-0.5,6.5) rectangle ++(3.5,1);
\node[vertex](x3) at (0,7) {}; \node at (0+0.5,7+0.1) {$x_3$};
\node[vertex](x3b) at (2,7) {}; \node at (2+0.5,7+0.1) {$\overline{x_3}$};
\draw[rounded corners=10pt,blue,thick] (4.5,6.5) rectangle ++(5.5,1);
\node[vertex](y30) at (5,7) {}; \node at (5+0.5,7+0.1) {$y_{3,0}$};
\node[vertex](y31) at (7,7) {}; \node at (7+0.5,7+0.1) {$y_{3,1}$};
\node[vertex](y32) at (9,7) {}; \node at (9+0.5,7+0.1) {$y_{3,2}$};
\draw[thick] (3,7)--(4.5,7);

\draw[rounded corners=10pt,blue,thick] (-0.5,4.5) rectangle ++(3.5,1);
\node[vertex](x2) at (0,5) {}; \node at (0+0.5,5+0.1) {$x_2$};
\node[vertex](x2b) at (2,5) {}; \node at (2+0.5,5+0.1) {$\overline{x_2}$};
\draw[rounded corners=10pt,blue,thick] (4.5,4.5) rectangle ++(3.5,1);
\node[vertex](y20) at (5,5) {}; \node at (5+0.5,5+0.1) {$y_{2,0}$};
\node[vertex](y21) at (7,5) {}; \node at (7+0.5,5+0.1) {$y_{2,1}$};
\draw[thick] (3,5)--(4.5,5);

\draw[rounded corners=10pt,blue,thick] (-0.5,2.5) rectangle ++(3.5,1);
\node[vertex](x1) at (0,3) {}; \node at (0+0.5,3+0.1) {$x_1$};
\node[vertex](x1b) at (2,3) {}; \node at (2+0.5,3+0.1) {$\overline{x_1}$};
\draw[rounded corners=10pt,blue,thick] (4.5,2.5) rectangle ++(1.5,1);
\node[vertex](y10) at (5,3) {}; \node at (5+0.5,3+0.1) {$y_{1,0}$};
\draw[thick] (3,3)--(4.5,3);

\draw[rounded corners=10pt,red,thick] (-0.5,-0.5) rectangle ++(5.5,1);
\node[vertex](x03) at (0,0) {}; \node at (0+0.5,0-0.3) {$x_{0,3}$};
\node[vertex](x02) at (2,0) {}; \node at (2+0.5,0-0.3) {$x_{0,2}$};
\node[vertex](x01) at (4,0) {}; \node at (4+0.5,0-0.3) {$x_{0,1}$};
\node[red] at (2.3,-0.8) {\bf Clause vertices};

\draw (x1)--(x02)(x1)--(x01)(x1b)--(x03)(x1b)--(x01);
\draw (x3)--(-2,7)--(-2,0)--(x03);
\draw (x2)--(-1.5,4.5)--(-1.5,0.5)--(x03);
\draw (x2b)--(-1.0,3.5)--(-1.0,1.5)--(x02);

\draw (y21)--(7,1)--(5,0.2);
\draw (y32)--(9,0)--(5,0);
\draw (y31)--(8.3,6)--(8.3,0.7)--(5,0.1);

\draw (y20)--(3,3.15);
\draw (y30)--(2.65,3.48);
\draw (y32)--(4.5,5.6)--(2.9,3.4);

\draw (y30)--(2.9,5.4); \path[-] (y30) edge[bend left=20] (6.4,5.5);
\draw (y31)--(3,5.2); \draw (y31)--(6.7,5.5);

\draw (y20)--(5,3.5);
\path[-] (y30) edge[bend left=37.5] (5.5,3.5);

\path[-] (y32) edge[bend left=30] (6,3);

\end{tikzpicture}}
\caption{\label{fig78}Example of the construction of Theorem 3.2 of \cite{schaefer-1978} from the input formula $\exists x_3 \forall x_2 \exists x_1: (x_1\vee \overline{x_1})\wedge(x_1\vee\overline{x_2})\wedge(\overline{x_1}\vee x_2\vee x_3)$. A line leading to an enclosure represents multiple edges leading to each vertex in the enclosure. Also, within each enclosure, all vertices are mutually joined by edges, which are not shown.}
\end{figure}

Schaefer \cite{schaefer-1978} defines a game as legitimate if for $i=1,\ldots,n$ the vertex selected at move $i$ is either $x_{n-i+1}$ or $\overline{x_{n-i+1}}$. If a player does an illegitimate move when all previous moves were legitimate, then the other player wins immediately. In order to prove this, fix $i\in[n]$ and assume that the first $n-i$ moves were legitimate, that is, the vertices selected so far are one of each pair $\{x_n,\overline{x_n}\},\ldots,\{x_{i+1},\overline{x_{i+1}}\}$. Clearly, from the construction of $G$, no vertex of $X_n\cup X_{n-1}\cup\ldots\cup X_{i+1}$ can be played now. Suppose that the player on move $n-i+1$ plays illegitimately, by selecting a vertex from $X_i\cup\ldots\cup X_0$ other than $x_i$ or $\overline{x_i}$. If the player selects a node in $X_k$, for $k<i$, then the opponent wins by choosing $y_{i,k}$ and no other vertex is playable, since any vertex of $X_k$ is adjacent to the illegitimately played vertex and any vertex of $(X_0\cup\ldots\cup X_i)-X_k$ is adjacent to $y_{i,k}$. Finally, if the illegitimate move is in $X_i$, it must be $y_{i,k}$ for some $k<i$ and therefore the opponent wins by selecting $x_k$ if $k>0$ or $x_{0,1}$ if $k=0$, which is playable because of the assumption about $B_1$. As before, no other vertex is playable.

As a consequence, we may consider from now on that the players move legitimately.
Then every move on the TQBF game in $\Phi$ has an obvious corresponding move on the Node Kayles game in $G(\Phi)$, and vice-versa: setting $x_i$ true (resp. false) in TQBF corresponds to selecting the vertex $x_i$ (resp. $\overline{x_i}$) in Node Kayles, and vice-versa.

If Player 1 has a winning strategy on a formula $\Phi$ of TQBF, Player 1 can play the corresponding moves on the graph $G(\Phi)$ of Node Kayles. After the last move of TQBF, all clauses are satisfied (because of the winning strategy of Player 1 on TQBF) and then no clause vertex is playable on Node Kayles. Since $n$ is odd, Player 1 is the last to play, winning the Node Kayles game.

Finally, if Player 2 has a winning strategy on a formula $\Phi$ of TQBF, Player 2 can play the corresponding moves on the graph $G(\Phi)$ of Node Kayles. After the last move of TQBF, there is at least one non-satisfied clause (because of the winning strategy of Player 2 on TQBF) and then there is a playable clause vertex on Node Kayles. Since $n$ is odd, Player 1 selected either $x_1$ or $\overline{x_1}$ and Player 2 can select a playable clause vertex in the next move, winning the Node Kayles game.
\end{proof}

\begin{theorem}\label{teo-pspace3}
The \textit{mis\`ere} clique-forming game and the \textit{mis\`ere} Node Kayles game are PSPACE-complete.
\end{theorem}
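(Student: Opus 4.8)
The plan is to prove PSPACE-completeness of the \emph{mis\`ere} Node Kayles game by a reduction from the normal (last-player-wins) Node Kayles game, which is known to be PSPACE-complete by Schaefer~\cite{schaefer-1978}. The equivalence between the \emph{mis\`ere} clique-forming game and the \emph{mis\`ere} Node Kayles game is immediate by complementation (playing Node Kayles on $\overline{H}$ is the clique-forming game on $H$, and the mis\`ere condition is preserved), so it suffices to settle one of them. Membership in PSPACE is clear since the game lasts at most $n$ moves with at most $n$ options per move, exactly as in Lemma~\ref{lem-pspace}. The substance is the hardness, and the standard difficulty with parity-flipping reductions is that simply appending a gadget can shift the parity of the total number of moves and thereby swap the winner in an uncontrolled way.

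My approach is to take an instance $H$ of normal Node Kayles and build a new graph $H'$ that forces a controlled parity adjustment so that the \emph{mis\`ere} winner of $H'$ is governed by the normal winner of $H$. The cleanest device is to attach a small disjoint gadget whose sole purpose is to absorb exactly one ``free'' move. Concretely, I would let $H'$ be the disjoint union of $H$ with a gadget $Q$ consisting of a single isolated vertex, or more robustly a fixed constant-size component engineered so that (a) it is always legal to play in $Q$ exactly once more than in $H$ alone at the very end, and (b) no interaction between $Q$ and $H$ ever arises because they share no edges (an independent set in $H'$ is just the union of independent sets in each component). Since a vertex set is independent in a disjoint union if and only if it is independent in each part, the game on $H'$ decomposes as a sum of independent games, and the total number of moves played is the sum of the moves played in $H$ and in $Q$.

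The key step is the parity bookkeeping. In the disjoint-union sum, the final configuration is a maximal independent set of $H'$, whose size is (size of the maximal independent set reached in $H$) plus (size of that reached in $Q$). Because $Q$ is a single isolated vertex, exactly one move is ever played in it, so the total move count always differs from the $H$-only count by precisely one. This deterministic shift of the parity is exactly what converts a normal-play outcome into a mis\`ere-play outcome: the player who would make the last move in normal Node Kayles on $H$ is forced to make the one extra move in $Q$, and I would argue that under optimal play the isolated vertex can be reserved and used to hand the parity back, so that whoever wins normal play on $H$ is precisely the loser (equivalently, the \emph{mis\`ere} winner) of $H'$. I would formalize this via a strategy-stealing/mirroring argument: a winning strategy for a designated player in normal Node Kayles on $H$ induces a winning strategy in the \emph{mis\`ere} game on $H'$ by playing the same moves in the $H$-part and treating the single $Q$-vertex as a spare tempo move to be played at the forced moment.

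The hard part will be making the parity argument airtight, because the adversary may try to play the $Q$-vertex prematurely to disrupt the tempo, so the gadget must be chosen so that premature play in $Q$ cannot help the opponent. This is where a one-vertex gadget may be too fragile and a slightly larger constant gadget (for instance a component admitting two different maximal independent sets of opposite-parity sizes, controllable by the first mover into it) becomes preferable; I expect the bulk of the proof to consist of verifying that whatever the opponent does in the gadget, the designated player can still realize the intended global parity, thereby completing the reduction and establishing PSPACE-hardness. Combined with PSPACE membership, this yields that both \emph{mis\`ere} games are PSPACE-complete.
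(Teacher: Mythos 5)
Your reduction cannot be completed as proposed: the difficulty you flag at the end (the opponent playing the gadget prematurely) is not a technicality that a better constant-size gadget will fix --- it is the fundamental obstruction, and the central parity claim is false. Misère play of a disjoint union is not a parity-shift of normal play; in fact the misère outcome of $H\cup Q$ is not even a function of the normal outcome of $H$. Concretely, let $H_1=P_4$ and let $H_2=P_3\cup P_3$ (two disjoint paths on three vertices). Both are second-player wins in normal Node Kayles: the Grundy values are $0$ and $2\oplus 2=0$. Yet with your single-vertex gadget $q$, the graph $P_4\cup\{q\}$ is a misère \emph{second}-player win, whereas $P_3\cup P_3\cup\{q\}$ is a misère \emph{first}-player win: the first player simply takes $q$, and $P_3\cup P_3$ itself is a misère second-player win (whatever the first player does, the second player can reply so as to leave exactly one isolated vertex, which the first player must then take as the last move). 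So whichever direction of correspondence you intend, one of $H_1,H_2$ violates it, and no strategy-transfer argument can rescue it. The same instance defeats your fallback gadget: in $P_3\cup P_3\cup P_3$ the first player takes the centre of one component and again leaves $P_3\cup P_3$. More symmetrically, $K_1$ and $P_3$ are both normal first-player wins, yet $K_1\cup P_3$ is a misère first-player win while $P_3\cup P_3$ is a misère second-player win, so the $P_3$-type gadget also maps equal normal outcomes to opposite misère outcomes. The conceptual reason is the well-known wildness of misère sums: once the opponent moves in $Q$, the resource you hold (a normal-play strategy on $H$) is the wrong one --- you would then need a misère-play strategy on the current $H$-position, and no attached gadget can convert one into the other. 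This is precisely why misère hardness does not follow from Schaefer's theorem used as a black box.

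The paper accordingly does not reduce from normal Node Kayles at all. It reopens Schaefer's TQBF reduction and modifies the construction internally: the number of variables is taken to be even rather than odd, and each auxiliary vertex $y_{i,j}$ is split into two nonadjacent false twins $y'_{i,j},y''_{i,j}$, so that a player who deviates from ``legitimate'' play still loses under the misère convention. Within that construction, legitimate play is forced to march through the variable cliques one vertex at a time, so the total number of moves --- hence who moves last --- is tied directly to satisfiability of the formula. That internal control over the game's length is exactly what an attached disjoint gadget cannot exert over an arbitrary instance $H$. The parts of your proposal that do match the paper are peripheral: PSPACE membership and the complementation equivalence between the misère Node Kayles and misère clique-forming games.
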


\begin{proof}
Since the \textit{mis\`ere} clique-forming game is the \textit{mis\`ere} Node Kayles game played on the complement of the graph (and vice-versa), we will only prove that the \textit{mis\`ere} Node Kayles game is PSPACE-hard. We obtain a reduction from the TQBF problem, based on the reduction of \cite{schaefer-1978} for the Node Kayles game (see Theorem \ref{teo-pspace-1978} above).

Following the proof of Theorem \ref{teo-pspace-1978}, instead of assuming that the number $n$ of variables is odd, we assume that $n$ is even. As in the proof of Theorem \ref{teo-pspace-1978}, we can do this since it is possible to add a new variable which does not appear in any clause. We also assume that $B_1=(x_1\vee\overline{x_1})$.

Given a TQBF formula $\Phi$, let $G=G(\Phi)$ be the graph constructed in the proof of Theorem \ref{teo-pspace-1978}. Let $G'=G'(\Phi)$ be the graph obtained from $G$ by replacing every vertex $y_{i,j}$ by two false twin vertices $y'_{i,j}$ and  $y''_{i,j}$ (respecting the adjacency of $y_{i,j}$). That is, $y'_{i,j}$ and  $y''_{i,j}$ are not adjacent in $G'$ and there are edges $y'_{i,j}w$ and $y''_{i,j}w$ in $G'$ for every $w\in V(G)$ such that $y_{i,j}w\in E(G)$.

As in the proof of Theorem \ref{teo-pspace-1978}, we can define a move $i$ as legitimate if the selected vertex is either $x_{n-i+1}$ or $\overline{x_{n-i+1}}$.

We also prove that, if an illegitimate move is done after all previous moves were legitimate, then the other player wins the \emph{mis\`ere} Node Kayles game. For this, fix $i\in[n]$ and assume that the first $n-i$ moves were legitimate. From the construction of $G'$, no vertex of $X_n\cup X_{n-1}\cup\ldots\cup X_{i+1}$ can be played now. Suppose that the player on move $n-i+1$ plays illegitimately, by selecting a vertex from $X_i\cup\ldots\cup X_0$ other than $x_i$ or $\overline{x_i}$. If the player selects a node in $X_k$, for $k<i$, then the opponent wins by choosing $y'_{i,k}$ and then $y''_{i,k}$ is the only playable vertex, since any vertex of $X_k$ is adjacent to the illegitimately played vertex and any vertex of $(X_0\cup\ldots\cup X_i)-X_k$ is adjacent to $y'_{i,k}$. Finally, if the illegitimate move is in $X_i$, it must be either $y'_{i,k}$ or $y''_{i,k}$ for some $k<i$ and therefore the opponent wins by selecting $x_k$ if $k>0$ or $x_{0,1}$ if $k=0$, which is playable because of the assumption about $B_1$. As before, if the illegitimate move was in $y'_{i,k}$ (resp. $y''_{i,k}$), then $y''_{i,k}$ (resp. $y'_{i,k}$) is the only playable vertex.

As a consequence, we may consider from now on that the players move legitimately in the \emph{mis\`ere} Node Kayles game. Then every move on the TQBF game in $\Phi$ has an obvious corresponding move on the \emph{mis\`ere} Node Kayles game in $G'(\Phi)$, and vice-versa: setting $x_i$ true (resp. false) in TQBF corresponds to selecting the vertex $x_i$ (resp. $\overline{x_i}$) in \emph{mis\`ere} Node Kayles, and vice-versa.

If Player 1 has a winning strategy on a formula $\Phi$ of TQBF, Player 1 can play the corresponding moves on the graph $G'(\Phi)$ of Node Kayles. After the last move of TQBF, all clauses are satisfied (because of the winning strategy of Player 1 on TQBF) and then no clause vertex is playable on Node Kayles. Since $n$ is even, Player 2 is the last to play, losing the \emph{mis\`ere} Node Kayles game.

Finally, if Player 2 has a winning strategy on a formula $\Phi$ of TQBF, Player 2 can play the corresponding moves on the graph $G'(\Phi)$ of Node Kayles. After the last move of TQBF, there is at least one non-satisfied clause (because of the winning strategy of Player 2 on TQBF) and then there is a playable clause vertex on \emph{mis\`ere} Node Kayles. Since $n$ is even, Player 2 selected either $x_1$ or $\overline{x_1}$ and Player 1 must select a playable clause vertex in the next move, losing the \emph{mis\`ere} Node Kayles game.
\qed
\end{proof}

\section{The avoidance game played on Cartesian products}
\label{sec:Cartesian}

 The {\em Cartesian product} $G\cp H$ of two graphs $G$ and $H$ is the graph with vertices $V(G)\times V(H)$, where the vertices $(g_1,h_1),(g_2,h_2)$ are adjacent if they are equal in one coordinate and adjacent in the other. If $g\in V(G)$, then the subgraph of $G\cp H$ induced by $\{g\}\times V(H)$ is isomorphic to $H$. It is called the {\em $H$-layer $($through $g)$}, denoted by ${ }^{g}H$. Analogously {\em $G$-layers} $G^h$ are defined. If $S\subseteq V(G\cp H)$, then the set $\{g\in v(G):\ (g,h)\in S\ {\rm for\ some}\ h\in V(H)\}$ is the {\em projection} $\pi_G(S)$ of $S$ on $G$. The projection $\pi_H(S)$ of $S$ on $H$ is defined analogously.
 
We further recall the following known result about the distance function in the Cartesian product. If $G$ and $H$ are connected graphs and $(g,h), (g',h')\in V(G\cp H)$, then the following {\em distance formula} holds: 
\begin{equation}
\label{eq:distance-formula}
d_{G\cp H}((g,h), (g', h')) = d_G(g,g') + d_H(h,h')\,.
\end{equation}
Moreover, if $P$ is a $(g,h),(g',h')$-geodesic in $G\cp H$, then the projection $\pi_G(P)$ induces a $g,g'$-geodesic in $G$ and the projection $\pi_H(P)$ induces a $h,h'$-geodesic in $H$. The distance formula~\eqref{eq:distance-formula} also implies that 
\begin{equation}
\label{eq:intervals-in-CP}
I_{G\cp H}((g,h), (g', h')) = I_G(g,g') \times I_H(h,h')\,.
\end{equation} 
For these results and more information on the Cartesian product we refer to the book on product graphs~\cite{hik-2011}. We make use of the following three known lemmas on general position sets of Cartesian products.

\begin{lemma}
	{\rm\cite[Lemma 2.4]{tian-2021b}}
	\label{lemma:2.4}
Let $G$ and $H$ be connected graphs and let $R$ be a general position set of $G\cp H$. If $u=(g,h)\in R$, then $V(^{g}{H})\cap R=\{u\}$ or $V(G^h)\cap R=\{u\}$. 
\end{lemma}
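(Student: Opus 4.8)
The plan is to establish the lemma by contradiction: I would assume that \emph{both} layers meet $R$ in strictly more than $\{u\}$ and then exhibit three distinct vertices of $R$ lying on a common geodesic. Concretely, suppose $V({}^{g}H)\cap R\neq\{u\}$ and $V(G^h)\cap R\neq\{u\}$. Then there is a vertex $u'=(g,h')\in R$ with $h'\neq h$ lying in the $H$-layer through $g$, and a vertex $u''=(g',h)\in R$ with $g'\neq g$ lying in the $G$-layer through $h$. Since $g'\neq g$ and $h'\neq h$, the three vertices $u,u',u''$ are pairwise distinct.

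The heart of the argument is a one-line computation with the distance formula~\eqref{eq:distance-formula}. I would compute $d(u',u)=d_H(h',h)$, $d(u,u'')=d_G(g,g')$, and $d(u',u'')=d_G(g,g')+d_H(h',h)$, so that $d(u',u)+d(u,u'')=d(u',u'')$. Hence $u$ lies on a shortest $u',u''$-path. Equivalently, by~\eqref{eq:intervals-in-CP} the interval $I[u',u'']$ equals $I_G(g,g')\times I_H(h',h)$, and $u=(g,h)$ belongs to it because $g\in I_G(g,g')$ and $h\in I_H(h',h)$ (each coordinate of $u$ being an endpoint of the respective geodesic). Thus three distinct vertices of $R$ lie on a common geodesic, contradicting that $R$ is a general position set. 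Therefore at least one of the two layers meets $R$ only in $u$.

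I expect no serious obstacle here; the whole proof rests on the single observation that producing one witness in each of the two layers forces $u$ to be the exact midpoint of a geodesic joining them, by the additivity built into~\eqref{eq:distance-formula}. The only points that genuinely need care are confirming that $u,u',u''$ are pairwise distinct (guaranteed by $g'\neq g$ and $h'\neq h$) and that the resulting relation is a true distance equality rather than a mere triangle inequality --- which is exactly what the product distance formula supplies.
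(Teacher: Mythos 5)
Your proof is correct: assuming both layers meet $R$ in a vertex other than $u$ and then using the distance formula \eqref{eq:distance-formula} to show $d(u',u)+d(u,u'')=d(u',u'')$, which places the three distinct vertices $u',u,u''$ on a common geodesic, is exactly the standard argument for this fact, and you justify the distinctness correctly. Note that the paper itself gives no proof --- the lemma is quoted from \cite{tian-2021b} --- but your argument is the canonical one, so there is nothing to compare beyond observing that it is complete and sound.
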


\begin{lemma}
{\rm\cite[Lemma 3.2]{klavzar-2021e}}
\label{lemma:3.7}
Let $R\subseteq G\cp H$ be such that $R$ has following properties.
\begin{enumerate}
\item Either $V(^{g}{H})\cap R=\{(g,h)\}$ or $V(G^h)\cap R= \{(g,h)\}$, for all $(g,h)\in R$.
\item Both $\pi _G(R)$ and $\pi_H(R)$ are general position sets of $G$ and $H$ respectively. 
\end{enumerate}
Then $R$ is a general position set of $G\cp H$.
\end{lemma}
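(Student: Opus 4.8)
The plan is to argue by contradiction, reducing a hypothetical geodesic through three points of $R$ to geodesics in the two factors via the interval formula~\eqref{eq:intervals-in-CP}, and then to use property~1 to force the three relevant projections to be \emph{pairwise distinct}, so that the general position hypothesis on the factors can actually be invoked.

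First I would suppose that $R$ is not a general position set of $G\cp H$. Then there are three distinct vertices $r_1,r_2,r_3\in R$, say $r_i=(g_i,h_i)$, one of which lies on a geodesic between the other two; relabelling, assume $r_2\in I_{G\cp H}[r_1,r_3]$. By~\eqref{eq:intervals-in-CP} we have $I_{G\cp H}[r_1,r_3]=I_G[g_1,g_3]\times I_H[h_1,h_3]$, so this membership is equivalent to the two conditions $g_2\in I_G[g_1,g_3]$ \emph{and} $h_2\in I_H[h_1,h_3]$ holding simultaneously. (Equivalently, one may expand $r_2\in I[r_1,r_3]$ through the distance formula~\eqref{eq:distance-formula} and split the resulting saturated triangle inequality coordinatewise.)

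The main obstacle is that $\pi_G(R)$ and $\pi_H(R)$ are only known to forbid three \emph{distinct} collinear vertices, whereas a priori some of the $g_i$, or some of the $h_i$, could coincide; property~1 is exactly what removes this degeneracy, and the key is to apply it at the \emph{middle} vertex $r_2$. If $V({}^{g_2}H)\cap R=\{r_2\}$, then $g_1\ne g_2$ and $g_3\ne g_2$ (otherwise $r_1$ or $r_3$ would be a second vertex of $R$ in the layer ${}^{g_2}H$), and moreover $g_1\ne g_3$, since $g_1=g_3$ together with $g_2\in I_G[g_1,g_3]$ would force $g_2=g_1$. Hence $g_1,g_2,g_3$ are pairwise distinct vertices of $\pi_G(R)$ with $g_2\in I_G[g_1,g_3]$, contradicting that $\pi_G(R)$ is a general position set of $G$. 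The other alternative of property~1, namely $V(G^{h_2})\cap R=\{r_2\}$, is handled by the symmetric argument: it forces $h_1,h_2,h_3$ to be pairwise distinct with $h_2\in I_H[h_1,h_3]$, contradicting that $\pi_H(R)$ is a general position set of $H$. Since property~1 guarantees that at least one of the two alternatives holds for $r_2$, a contradiction arises in every case, so $R$ must be a general position set of $G\cp H$. The only delicate point is the bookkeeping of coincident coordinates, and the observation above shows that invoking property~1 at $r_2$ precisely rules them out.
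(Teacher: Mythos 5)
Your proof is correct. Note that the paper you were asked to match does not actually prove this lemma at all --- it is imported verbatim as \cite[Lemma 3.2]{klavzar-2021e} and used as a black box --- so there is no internal proof to compare against; your argument supplies one, and it is the natural one: reduce $r_2\in I_{G\cp H}[r_1,r_3]$ to the coordinate conditions $g_2\in I_G[g_1,g_3]$ and $h_2\in I_H[h_1,h_3]$ via the interval formula~\eqref{eq:intervals-in-CP}, then use property~1 \emph{at the middle vertex} $r_2$ to force the three projections in the relevant factor to be pairwise distinct, contradicting property~2. The handling of the degenerate coincidences ($g_1=g_2$, $g_3=g_2$ excluded by the layer condition, and $g_1=g_3$ excluded because it would collapse the interval to a point) is exactly the bookkeeping this lemma requires, and your treatment of it is complete.
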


\begin{lemma}
{\rm\cite[Lemma 3.3]{klavzar-2021e}}
\label{lemma:3.8}
Let $G$ and $H$ be connected graphs and let $R\subseteq V(G\cp H)$. If $\pi _G(R)$ is a general position set in $G$ and $\pi _G(R)=|R|$, then $R$ is a general position set of $G\cp H$.
\end{lemma}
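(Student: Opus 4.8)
The plan is to argue directly from the interval identity \eqref{eq:intervals-in-CP}, rather than through Lemma~\ref{lemma:3.7}. Note that applying Lemma~\ref{lemma:3.7} would additionally require $\pi_H(R)$ to be a general position set of $H$, which is not among the hypotheses here, so that route is unavailable. First I would unpack the cardinality condition (reading the printed ``$\pi_G(R)=|R|$'' as $|\pi_G(R)|=|R|$, the only sensible meaning since $\pi_G(R)$ is a set): this says that the projection map $(g,h)\mapsto g$ is injective on $R$, so distinct vertices of $R$ have distinct first coordinates, or equivalently each $H$-layer ${}^{g}H$ meets $R$ in at most one vertex.

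Next, suppose for contradiction that $R$ is not a general position set. Then some three distinct vertices of $R$ lie on a common geodesic of $G\cp H$; ordering them along that geodesic, I may write them as $p=(g_1,h_1)$, $q=(g_2,h_2)$, $r=(g_3,h_3)$ with the middle one satisfying $q\in I[p,r]$ (a subpath of a geodesic is again a geodesic). By the interval identity \eqref{eq:intervals-in-CP} we have $I[p,r]=I_G(g_1,g_3)\times I_H(h_1,h_3)$, so the membership $q\in I[p,r]$ forces $g_2\in I_G(g_1,g_3)$, i.e.\ $g_2$ lies on a $g_1,g_3$-geodesic of $G$.

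Finally I would invoke injectivity of the projection: since $p,q,r$ are pairwise distinct vertices of $R$ and their first coordinates are therefore pairwise distinct, $g_1,g_2,g_3$ are three \emph{distinct} vertices of $\pi_G(R)$ lying on a common geodesic of $G$. This contradicts the hypothesis that $\pi_G(R)$ is a general position set, and hence $R$ must be a general position set. The one subtle point---and the sole place where the hypothesis $|\pi_G(R)|=|R|$ is used---is guaranteeing that the three projected points $g_1,g_2,g_3$ are genuinely distinct; without the injectivity the projected triple could collapse to two (or one) vertices and the contradiction would evaporate. Beyond this, the passage from a geodesic in the product to a geodesic in the factor is immediate from \eqref{eq:distance-formula}--\eqref{eq:intervals-in-CP}, so I expect no serious obstacle past this bookkeeping.
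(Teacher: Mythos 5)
Your proof is correct, but note that there is nothing in the paper to compare it against: the paper states this lemma as a quoted result from \cite[Lemma 3.3]{klavzar-2021e} and gives no proof of it. Judged on its own merits, your argument is sound and complete. Reading the printed ``$\pi_G(R)=|R|$'' as $|\pi_G(R)|=|R|$ is indeed the only sensible interpretation, and your reduction of it to injectivity of the projection on $R$ is right. The contradiction argument works: ordering the three collinear vertices of $R$ along the common geodesic gives $q\in I[p,r]$, the product interval formula \eqref{eq:intervals-in-CP} yields $g_2\in I_G(g_1,g_3)$, and injectivity is exactly what guarantees $g_1,g_2,g_3$ are three \emph{distinct} vertices of $\pi_G(R)$ on a common geodesic of $G$, contradicting the hypothesis. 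You are also right that Lemma~\ref{lemma:3.7} cannot be invoked here, since nothing forces $\pi_H(R)$ to be a general position set of $H$ (its first condition does follow from injectivity, but its second does not); flagging that and working directly from \eqref{eq:distance-formula}--\eqref{eq:intervals-in-CP} is the correct move, and is presumably how the cited source argues as well.
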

In \cite[Theorem 3.6]{klavzar-2021e} it was proved that player A wins the $\gp$-achievement game on the rook's graphs $K_n\cp K_m$ if and only if both $n$ and $m$ are odd. Thus we first treat the $\gp$-avoidance game on $K_n\cp K_m$. As it happens, the present proof is much more involved than the one for the $\gp$-achievement game, again demonstrating that the two games can be intrinsically different on the same class of graphs. 

\begin{theorem}\label{thm:p-complete}If $n,m\geq 2$, then B wins the $\gp$-avoidance game on $K_n\cp K_m$  if and only if either $n=2$ and $m$ is odd; or $n=3$ and $m$ is even.
\end{theorem}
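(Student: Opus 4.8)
The plan is to recast general position sets of the rook's graph $K_n\cp K_m$ combinatorially. Think of the vertices as the cells of an $n\times m$ board. By the interval formula~\eqref{eq:intervals-in-CP}, two cells in different rows and different columns are at distance $2$ and their interval is exactly the four corners of the rectangle they span, so three cells lie on a common geodesic if and only if one of them shares its row with a second and its column with a third. Consequently, by Lemma~\ref{lemma:2.4} every cell of a general position set is alone in its row or alone in its column, and conversely this condition is sufficient. Encoding a set $S$ as the bipartite graph on the $n$ rows and $m$ columns whose edges are the chosen cells, the condition reads ``no edge joins two vertices of degree at least $2$'', that is, the incidence graph is a disjoint union of stars. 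Hence general position sets of $K_n\cp K_m$ are exactly the star forests of $K_{n,m}$, and the $\gp$-avoidance game becomes a mis\`ere game in which the two players alternately add an edge of $K_{n,m}$ while keeping the graph a star forest, the last to move losing. As A plays the odd moves, B wins if and only if the total number $T$ of edges played under optimal play is odd.

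I would then classify the legal moves. An edge can be added in only two ways: start a new star on a fresh (isolated) row and a fresh column, or extend an existing star by attaching a fresh vertex to its centre; merging two components or closing a cycle is forbidden. The subtle point is that a single edge is \emph{flexible}, being extendable into either a row-star or a column-star, whereas a committed star can only absorb fresh vertices of one kind. I would track a position by the numbers $f_R,f_C$ of fresh rows and columns and by the supply of flexible edges and committed stars, noting that each move spends a fresh row or a fresh column. By $K_n\cp K_m\cong K_m\cp K_n$ I may assume $n\le m$; the target is then to show B wins exactly when ($n=2$ and $m$ odd) or ($n=3$ and $m$ even).

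For $n=2$ the analysis is short. After A opens the first column, B either completes it, which creates a full column (a maximal star) and leaves B as the last mover, or B opens a second column; but once two columns each carry one cell, completing any column would give a cell with both a row-mate and a column-mate and is illegal, so from then on every move occupies a fresh column with exactly one cell and the length is forced to $T=m$. Comparing B's two options shows B wins precisely when $m$ is odd.

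The heart of the matter is $n=3$, where I claim B wins iff $m$ is even, and $n\ge 4$, where A always wins. For $n=3$, the row and column symmetries reduce B's first reply to three cases (same column as $a_1$, same row, or neither); in each I would have the player who wants the correct parity drive the game to a \emph{locked} position -- a single reservoir column-star together with enough row-stars to exhaust all remaining columns, leaving a controlled number $f_R$ of fresh rows forced onto the reservoir's centre -- and then invoke Theorem~\ref{thm:up-to-gp-set}: once the played cells together with all playable cells form a general position set, the remaining length equals the number of playable cells, freezing its parity. For $n\ge 4$ I would instead give A a single parity-control strategy, opening with one edge and thereafter preserving, after each A-move, a flexible edge and an even buffer of fresh rows, so that A can always answer B and spend the last flexible edge to make $T$ even irrespective of the parities of $n$ and $m$. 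The main obstacle is exactly this endgame bookkeeping for $n=3$: since fresh rows are the scarce resource ($n\le m$) and may run out while columns survive, and since single edges can still be converted either way, the reachable configurations proliferate and one must verify in every branch that the intended player can force the parity; the $n\ge 4$ case then demands a uniform strategy rather than a parity-by-parity argument.
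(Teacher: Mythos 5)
Your reformulation is sound: in $K_n\cp K_m$ a set of cells is in general position if and only if every chosen cell is alone in its row or alone in its column (Lemma~\ref{lemma:2.4} gives necessity, and sufficiency follows from~\eqref{eq:intervals-in-CP}, since the interval of two cells in distinct rows and columns is the four corners of their rectangle), and this is exactly the star-forest condition in $K_{n,m}$. Your $n=2$ analysis is complete and correct, and arguably cleaner than the paper's pairing strategy. The problem is everything else: the theorem is an equivalence over all $n,m\ge 2$, and for $n=3$ and for $n\ge 4$ you never produce the strategies, only announce what you ``would'' do. That is not a presentational shortfall; it is the entire mathematical content of the statement. For $n=3$, $m$ even you never even name B's reply (the paper's proof here is one short paragraph: B answers $a_1=(u_1,v_1)$ with $b_1=(u_2,v_1)$ in the same $K_3$-layer, after which Lemma~\ref{lemma:2.4} confines all further play to ${}^{u_3}K_m$, so the game lasts either $3$ or $m+1$ moves, both odd), and the case $n=3$, $m$ odd, where A must win, is not addressed at all beyond the phrase ``drive the game to a locked position.''

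For $n\ge 4$ the gap is worse, because your proposed invariant is not shown to be maintainable and is doubtful as stated. You want A to preserve, after each of his moves, ``a flexible edge and an even buffer of fresh rows.'' But B can commit the flexible edge on his turn, and recreating one costs both a fresh row and a fresh column, which need not both exist late in the game (fresh rows are the scarce resource once $n\le m$); nothing in your sketch rules out B exhausting them with the wrong parity. Moreover, the assertion that a single uniform strategy makes $T$ even ``irrespective of the parities of $n$ and $m$'' is precisely what requires proof: the paper's argument splits into three parity cases (both even, both odd, odd-by-even), treats separately the two non-equivalent first replies of B (sharing a layer with $a_1$ or not), and pushes the layer-parity invariant~\eqref{eq:Hamming-graphs} (an odd number of playable cells in each touched $K_n$-layer, an even number in each touched $K_m$-layer) through a lengthy endgame analysis, with special escape moves when B plays along $K_n^{v_1}$ too long. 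Nothing of that kind appears in your text, and you yourself flag that ``one must verify in every branch that the intended player can force the parity.'' What you have is a correct proof for $n=2$ together with a plan---not a proof---for the cases that constitute the theorem.
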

\proof 
Consider the $\gp$-avoidance game on $K_n\cp K_m$. Let $V(K_n)=\{u_1,\ldots,u_n\}$ and $V(K_m)=\{v_1,\ldots,v_m\}$. Set $G=K_n\cp K_m$ for the rest of the proof. 

We first show that B wins the game on $K_2\cp K_m$ when $m$ is odd. By the vertex transitivity of $G$ we may assume that A start the game with $a_1=(u_1,v_1)$. Then B replies by picking a new vertex from the same $K_m$-layer, say $b_1=(u_1,v_2)$.  Then Lemma~\ref{lemma:2.4} guarantees that both $(u_2,v_1), (u_2, v_2)\notin \Pl_G(a_1, b_1)$. The game then continues in this manner, that is, whenever there exists a $K_2$-layer in which no vertex has yet been played and it is B's turn, he selects a vertex from the $K_m$-layer in which A has just played. Lemmas~\ref{lemma:2.4} and~\ref{lemma:3.7} guarantee that the selected vertices so far form a general position set and the corresponding vertices in the neighboring $K_m$-layer are not playable further. As $m$ is odd, A will play the last vertex and hence B wins.

Next we are going to prove that B wins on $K_3\cp K_m$ if $m$ is even. Let A start the game with $a_1=(u_1,v_1)$. Then B picks the vertex $b_1=(u_2,v_1)$. If A chooses $a_2=(u_3,v_1)$, then it follows from Lemma \ref{lemma:2.4} that no further moves are possible. Hence B wins the game. So we may assume that A pick $a_2$ from a new $K_3$-layer. Also by Lemma \ref{lemma:2.4}, $\Pl_{G}(a_1,b_1)\subseteq $ $^{u_3}K_m$.  By the symmetry of $G$, we may assume that $a_2=(u_3,v_2)$. Now, since $m$ is even and $\Pl_G(a_1,b_1,a_2)= \{u_3\}\times (V(K_m)\backslash \{v_1,v_2\})$, A plays the last vertex. Also Lemma \ref{lemma:3.7} guarantees that the selected vertices so far form a general position set of $G$. Hence B wins the game.\\

It remains to prove that in all the other cases A has a winning strategy. Let A begin by choosing $a_1=(u_1,v_1)$. For the first move $b_1=(u_i,v_j)$ of B we may, using the symmetry of $G$, without loss of generality assume that $i=2$ and $j\in[2]$. Also in the rest of the proof, whenever $(u_r,v_s)$ is selected before $(u_k,v_l)$, then due to the symmetry of $G$ we may assume that $r\leq k$ or $s\leq l$. The strategy of player A is to achieve the following: after each of his moves $a_i$, for some fixed $i$ to be explicitly determined in different cases,  
\begin{equation}
\label{eq:Hamming-graphs}
|\Pl_G(\ldots a_i) \cap V(K_n^{v_k})|\ 
{\rm is\ odd\ and}\ 
|\Pl_G(\ldots a_i) \cap V(^{u_j}K_m)|\
{\rm is\ even} 
\end{equation} hold for all layers $K_n^{v_k}$ and all layers $^{u_j}K_m$ in which at least one vertex has already been played.

\medskip
Suppose first that $b_1=(u_2,v_1)$. Then $\Pl_{G}(a_1, b_1)\subseteq (V(G)\backslash (^{u_1}K_m\cup{ }^{u_2}K_m ))$. We consider the following three cases.

\medskip\noindent
{\bf Case 1.} Both $n$ and $m$ are even. \\
 The strategy of player A is to achieve the condition~\eqref{eq:Hamming-graphs}, after each of his moves $a_i$ with $i\geq 2$. If $n=2$, then ~Lemma \ref{lemma:2.4} implies that $\Pl_G(a_1,b_1)=\emptyset$ and so $A$ wins the game. So, assume that $n\geq 4$ and hence A can continue by
choosing $a_3=(u_3,v_1)$. Again by ~Lemma \ref{lemma:2.4}, $\Pl_{G}(a_1, b_1,a_2)\subseteq V(G)\backslash (^{u_1}K_m\cup{ }^{u_2}K_m \cup {}^{u_3} K_m)$, the condition~\eqref{eq:Hamming-graphs} is fulfilled. In the rest of the game whenever B select a vertex $x$, then A replies with a neighbor of $x$ from the same $K_n$-layer. Thus at any stage  of the game A can make B to choose from a new $K_m$-layer. Since $n$ is even, B will play the last vertex. Also by using ~Lemma \ref{lemma:3.7} we can see that the selected vertices form a general position set of $G$. Hence A will win the $\gp$-avoidance game on $G$. 

\medskip\noindent
{\bf Case 2.} Both $n$ and $m$ are odd. \\
As in the previous case, A chooses $a_2=(u_3,v_1)$ for the second move. After that for each $i=2$ to $\lfloor\frac{n}{2}\rfloor-1$, whenever B selects $b_i=(u_{2i},v_1)$ from $K_n^{v_1}$, A replies with a vertex $a_{i+1}=(u_{2i+1},v_1)$ from $K_n^{v_1}$. This is a legal move since $n$ is odd.  Furthermore,  if B proceeds with $b_{\lfloor\frac{n}{2}\rfloor}=(u_{n-1},v_1)$  from $K_n^{v_1}$, then A deliberately picks the vertex $a_{\lfloor\frac{n}{2}\rfloor+1}=(u_n,v_2)$. Then by  Lemma~\ref{lemma:2.4}, $\Pl_{G}(\ldots a_{\lfloor\frac{n}{2}\rfloor+1})= V (^{u_n} K_m)\setminus \{(u_n,v_1),(u_n,v_2)\}$. Since $m$ is odd, A wins the game. Hence in the following we can assume that for some index $i$ with $2\leq i\leq \lfloor\frac{n}{2}\rfloor$,  B must select $b_i$ from a new $K_n$-layer. Let $r$ be the least such index. Then $a_1,a_2\dots a_{r-1}\in K_n^{v_1}$.  Now, by using Lemma~\ref{lemma:2.4} we can assume that $b_r=(u_{2r},v_2)$. If $r=\lfloor\frac{n}{2}\rfloor$, then A deliberately picks the vertex $a_{r+1}=(u_{2r+1},v_1)=(u_n,v_1)$ for his next move.  Thus again by Lemma~\ref{lemma:2.4}, $\Pl_{G}(\ldots a_{r+1})=  V(^{u_{n-1}} K_m)\setminus \{(u_{n-1},v_1),(u_{n-1},v_2)\}$. Since $m$ is odd, A wins the game. So assume that $2\leq r\leq \lfloor\frac{n}{2}\rfloor-1$. In the remaining game, the strategy of  player A is to achieve the condition~\eqref{eq:Hamming-graphs}, after each of his moves $a_i$ with $i> r$. Then A continues with the vertex $a_{r+1}=(u_{2r},v_3)$ and condition~\eqref{eq:Hamming-graphs} is fulfilled. Hereafter, whenever B selects a vertex in a $K_n$-layer in which at least one vertex has been played earlier, then A can continue with a new vertex from the same $K_n$-layer. Similarly, whenever B selects a vertex from a previously visited $K_m$-layer, then A replies with a new vertex from the same $K_m$-layer. Since $n$ is odd, again condition~\eqref{eq:Hamming-graphs} is fulfilled. On the other hand, consider the case that B plays $b_i=(u_k,v_j)$ from a new layer. In this case if $j$ is odd, then A replies with another such vertex from a new layer; and if $j$ is even, then A replies with a new vertex from the same $K_m$-layer. After each such  move of A, the condition~\eqref{eq:Hamming-graphs} remains fulfilled since both $n$ and $m$ are odd. Note that by our strategy, if $b_i=(u_k,v_j)$, then $k$ is even at each stage of the game. Player A then continues this strategy and wins the game.  

\medskip\noindent
{\bf Case 3.} $n>3$ is odd and $m>2$ is even.  \\
Let player A begin by choosing $a_2=(u_3,v_2)$. After that for each $i=2$ to $\lfloor\frac{n}{2}\rfloor-1$, whenever B selects $b_i=(u_{2i},v_j)$ from $K_n^{v_1}$ or from $K_n^{v_2}$, A replies with the vertex $a_{i+1}=(u_{2i+1},v_j)$ from the same $K_n$-layer. This is a legal move since $n$ is odd.  Furthermore,  if B proceeds with $b_{\lfloor\frac{n}{2}\rfloor}=(u_{n-1},v_j)$  from $K_n^{v_1}$ or from $K_n^{v_2}$,  then A deliberately picks the vertex $a_{\lfloor\frac{n}{2}\rfloor+1}=(u_n,v_3)$. This is a legal move since $m>2$ and it is even. Then by Lemma \ref{lemma:2.4}, either $\Pl_{G}(\ldots a_{\lfloor\frac{n}{2}\rfloor+1})= (V(G)\backslash \{v_1,v_2,v_3\})\times \{u_3,u_n\}$ or  $\Pl_{G}(\ldots a_{\lfloor\frac{n}{2}\rfloor
+1})= (V(G)\backslash \{v_1,v_2,v_3\})\times \{u_n\}$. Since $m$ is even, A wins the game. Hence in the following we can assume that for some index $i$ with $2<i\leq\lfloor\frac{n}{2}\rfloor$, $b_i$ selects from a new $K_n$-layer. Let $r$ be the least such index. Then $a_1,\ldots,a_{r-1}\in V(K_n^{v_1} \cup K_n^{v_1} )$. Now by using Lemma~\ref{lemma:2.4}, we can assume that  $b_r=(u_{2r},v_3)$. If $r=\lfloor\frac{n}{2}\rfloor$, then A picks the vertex $a_{r+1}=(u_{n},v_1)$. Then by  Lemma~\ref{lemma:2.4}, $\Pl_G(\ldots a_{r+1})=(V(G)\backslash \{v_1,v_2,v_3\})\times \{u_{n-1}\}$. Since $m$ is even, A wins the game. So assume that $1<r\leq \lfloor\frac{n}{2}\rfloor-1$. In the remaining game, the strategy of player A is to achieve the condition~\eqref{eq:Hamming-graphs}, after each of his moves $a_i$ with $i> r$. Here A continue with $a_{r+1}=(u_{2r},v_4)$. Since $n>3$ is odd and $m>2$ is even, this is a legal move. By Lemma~\ref{lemma:2.4}, condition~\eqref{eq:Hamming-graphs} is fulfilled. In the rest of the game whenever B picks a vertex in a $K_m$-layer in which at least one vertex has been played earlier, then A chooses a new vertex from the same $K_m$ layer. Similarly, if B picks a vertex in a $K_n$-layer in which at least one vertex has been played earlier, then A chooses a new vertex from the same $K_n$-layer. Now suppose that B plays $b_i=(u_k,v_j)$ from a new layer. In this case if $k$ is odd, A replies with an another such vertex; and if $k$ is even, A selects a vertex from the same $K_m$-layer. Since $m$ is even and $n$ is odd, it follows from Lemma~\ref{lemma:2.4} that the condition~\eqref{eq:Hamming-graphs} remains fulfilled. Note that at any stage of the game, if $b_i=(u_k,v_j)$, then $j$ is odd. Following this strategy, A wins the game.\\

Suppose second that $b_1=(u_2,v_2)$. Again, we need to consider the following three cases.

\medskip\noindent
\textbf{Case 1.} Both $n$ and $m$ are even. \\
In this case the strategy of player A is to achieve the condition~\eqref{eq:Hamming-graphs}, after each of his moves $a_i$ with $i\geq 2$. A continues the game by choosing $a_2=(u_3,v_2)$. Then by ~Lemma \ref{lemma:2.4}, $\Pl_{G}(a_1, b_1,a_2)\subseteq V(G)\backslash (^{u_2}K_m\cup{ }^{u_3}K_m)$. At some point of the game, if B picks a vertex from a $K_n$-layer in which at least one vertex has been played earlier, then A can choose from the same $K_n$-layer. Similarly if B chooses a vertex from a $K_m$-layer in which at least one vertex has been played earlier,  then A can choose a vertex from the same $K_m$-layer. Furthermore, if B plays a vertex from a new layer (no vertex of this layer was played before), then A replies with another such vertex from a new layer.  By Lemma \ref{lemma:3.7}, the selected vertices form a general position set of $G$. Also since both $m$ and $n$ are even the condition~\eqref{eq:Hamming-graphs} remains fulfilled. Hence A wins the game.

\medskip\noindent
{\bf Case 2.} Both $n$ and $m$ are odd. \\
In the remaining game, the strategy of player A is to achieve condition~\eqref{eq:Hamming-graphs}, after each of his moves $a_i$ with $i\geq 2$. A chooses $a_2=(u_2,v_3)$. If B selects a vertex in a $K_n$-layer in which at least one vertex has been played earlier, then choose a vertex from the same $K_n$-layer. And if B plays a vertex such that it is the first vertex played in the two layers, then replies with another such vertex.  And if B chooses a vertex from a $K_m$-layer in which at least one vertex has been played earlier then choose a vertex from the same $K_m$-layer. Since both $n$ and $m$ are odd, the condition~\eqref{eq:Hamming-graphs} remains fulfilled. Hence B plays the last vertex. 

\medskip\noindent
{\bf Case 3.} $n>3$ is odd and $m>2$ is even. \\
A continues by choosing $a_2=(u_3,v_2)$. After that for each $i=2$ to $\lfloor\frac{n}{2}\rfloor-1$, whenever B selects $b_i=(u_{2i},v_j)$ from $K_n^{v_1}$ or $K_n^{v_2}$ , A replies with a vertex $a_{i+1}=(u_{2i+1},v_j)$ from the same $K_n$-layer. This is a legal move since $n$ is odd.  Furthermore,  if B proceeds with $b_{\lfloor\frac{n}{2}\rfloor}=(u_{n-1},v_j)$  from $K_n^{v_1}$ or $K_n^{v_2}$,  then A deliberately picks the vertex $a_{\lfloor\frac{n}{2}\rfloor+1}=(u_n,v_3)$. Since $m>2$ and even, this is a legal move. Then by  Lemma~\ref{lemma:2.4}, $\Pl_{G}(\ldots,a_{\frac{n}{2}+1})= (V(G)\backslash \{v_1,v_2,v_3\})\times \{u_1,u_n\}$. Since $m$ is even and  by  Lemma~\ref{lemma:2.4}, A wins the game. So assume that for some index i B selects a vertex from a new $K_n$-layer. Let $r$ be the least such index. If $r=\lfloor\frac{n}{2}\rfloor$, choose $a_{r+1}=(u_n,v_1)$. Then $\Pl_{G}(\ldots a_{r+1})=(V(G)\backslash \{v_1,v_2,v_3\})\times \{u_{n-1}\}$. Since $m$ is even, A wins the game. Now suppose that   $2\leq r<\lfloor\frac{n}{2}\rfloor$. Then by Lemma~\ref{lemma:2.4}, $b_r=(u_{2r},v_3)$. In the remaining game, the strategy of player A is to achieve the condition~\eqref{eq:Hamming-graphs}, after each of his moves $a_i$ with $i> r$. A continues the game by choosing $a_{r+1}=(u_{2r},v_4)$ . Since $m>2$ this is a legal move. Now if B picks a vertex in a $K_n$-layer in which at least one vertex has been played earlier, then A replies with a vertex from the same $K_n$-layer. Similarly if B selects a vertex in a $K_m$-layer in which at least one vertex has been played earlier, then select a vertex from the same $K_m$-layer. In the rest of the game, suppose that B chooses $b_i=(u_k,v_j)$ from a new layer. If $k$ is odd, then A replies by another such vertex; and if  $k$ is even, A selects a vertex from the same $K_m$-layer. Since $m$ is even and $n$ is odd The condition~\eqref{eq:Hamming-graphs} remains fulfilled. Note that at any stage, by our strategy if $b_i=(u_k,v_j)$, then $j$ is odd. Hence B plays the last vertex. Following this strategy, A wins the game.
\qed

In {\rm\cite{klavzar-2021e}}, it is proved that player B wins the $\gp$-achievement game for any connected bipartite graph.
In the following two theorems, we show that the same behaviour occurs in grids $P_n\cp P_m$, but fails in cylinders $C_n\cp P_m$, which are subclasses of connected bipartite graphs.

\begin{theorem}
\label{theorem:4.4} If $n\geq 3$ and $m\geq 2$, then B wins the $\gp$-avoidance game on $P_n\cp P_m$.
\end{theorem}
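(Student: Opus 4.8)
The plan is to recast the avoidance game as a parity problem and then, for each opening move of A, exhibit a reply strategy for B that drives the play to a maximal general position set of odd size. Set $G=P_n\cp P_m$ and write a vertex as $(i,j)$ with $i\in[n]$ indexing $P_n$ and $j\in[m]$ indexing $P_m$. Since the game stops exactly when the played set becomes a maximal general position set, the number of moves equals the size of that terminal set; as A plays on the odd turns, B wins precisely when the terminal set has odd cardinality. By \eqref{eq:intervals-in-CP}, the interval $I[(i,j),(i',j')]$ is the box $[\min(i,i'),\max(i,i')]\times[\min(j,j'),\max(j,j')]$, so a vertex lies on a common geodesic of two others iff it is coordinatewise between them; with Lemma~\ref{lem:bipartite}, any played configuration of size $\ge 3$ is an independent set in which no vertex lies in the bounding box of two others. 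Two engines are available: a \emph{three-move win}, where B plays $b_1$ so that $\Pl_G(a_1,b_1)\neq\emptyset$ (A must move) yet $\{a_1,b_1,w\}$ is \emph{maximal} for every $w\in\Pl_G(a_1,b_1)$, forcing A to play the last vertex; and, more generally, Theorem~\ref{thm:up-to-gp-set}(ii), reaching after some $b_k$ a frozen set $\Pl_G(\ldots b_k)\cup\{a_1,\ldots,b_k\}$ in general position with $|\Pl_G(\ldots b_k)|$ odd.

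I would first dispose of all \emph{boundary} openings, i.e. $a_1$ in row $1$ or $n$ or column $1$ or $m$; note that when $m=2$ (the ladder) \emph{every} vertex is of this type. By the dihedral symmetry of $G$ take $a_1=(1,j)$. Then B replies at the corner $b_1=(n,1)$, so that $I[a_1,b_1]=[1,n]\times[1,j]$ sweeps the full band of columns $1,\dots,j$. Using Lemma~\ref{lem:playable} one checks $\Pl_G(a_1,b_1)=\{(r,c):2\le r\le n,\ j<c\le m\}$, which is empty exactly when $j=m$ (whence $a_1$ is a corner, handled by the mirror-image reply) and nonempty otherwise. For any reply $w=(r,c)$ with $c>j$, the three boxes $I[a_1,b_1]$, $I[a_1,w]=[1,r]\times[j,c]$ and $I[b_1,w]=[r,n]\times[1,c]$ already cover all columns $\le c$, while each cell $z=(r',c')$ with $c'>c$ is blocked because $w$ lies together with $a_1$ in $I[z,a_1]$ when $r'\ge r$, and together with $b_1$ in $I[z,b_1]$ when $r'<r$. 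Hence $\Pl_G(a_1,b_1,w)=\emptyset$, the game ends after three (odd) moves, and B wins. This corner-reply/band-sweep argument is uniform across all boundary openings, and in particular resolves the whole case $m=2$.

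The substance of the theorem, and the step I expect to be the main obstacle, is a \emph{doubly-interior} opening $a_1=(i,j)$ with $2\le i\le n-1$ and $2\le j\le m-1$ (which can only occur when $n,m\ge 3$). Here the three-move mechanism provably fails: every box $I[a_1,b_1]$ points into a single quadrant about $a_1$, so $\Pl_G(a_1,b_1)$ always meets two opposite quadrants, and one verifies that replying adjacently or at the antipode either ends the game on B's move or lets A answer in the far quadrant and split the board into two regions whose misère interaction B cannot steer by a naive pairing. To handle this I would instead keep a layer-parity invariant in the spirit of \eqref{eq:Hamming-graphs}: by Lemma~\ref{lemma:2.4} each played vertex is alone in its $P_m$-layer or in its $P_n$-layer, and B answers each move of A by a vertex in the same already-visited row- or column-layer, choosing parities so that after each of his moves an even number of playable vertices remains in each visited $P_m$-layer and an odd number in each visited $P_n$-layer (or the transpose). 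Lemma~\ref{lemma:3.7} keeps the selected set in general position, and when no new layer can be opened the invariant forces the residual playable set to freeze with odd size, so that Theorem~\ref{thm:up-to-gp-set}(ii) applies and A is left to make the final move.

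The hard part is entirely in this last case: one must show that B always has a legal invariant-preserving reply (including at the moment A first moves into a fresh layer, where B must decide whether to match a row or a column so as not to break \eqref{eq:Hamming-graphs}), and that the invariant genuinely forces odd terminal parity for \emph{every} residual layer configuration, irrespective of the parities of $n$ and $m$. I would organize this as a case analysis on how A opens each new layer, mirroring the structure of the proof of Theorem~\ref{thm:p-complete}, with Lemmas~\ref{lemma:2.4} and~\ref{lemma:3.7} doing the general-position bookkeeping and Theorem~\ref{thm:up-to-gp-set}(ii) closing each branch.
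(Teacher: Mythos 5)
Your first block (boundary openings) is correct, and it is exactly the mechanism the paper itself uses: a corner reply $b_1$ for which $\Pl_G(a_1,b_1)\neq\emptyset$ while $\{a_1,b_1,w\}$ is a maximal general position set for every playable $w$, so A is forced to make the third and last move. The problem is your second block. The doubly-interior case, which you rightly call the substance of the theorem, is never actually proved: you explicitly defer the two claims that would constitute the proof (that B always has a legal invariant-preserving reply, and that the invariant forces odd terminal parity for every residual configuration). As it stands this is a plan, not an argument, and it is also on shaky ground in principle: in the bipartite grid, any played set of size at least $3$ is independent by Lemma~\ref{lem:bipartite}, and the rook's-graph layer dynamics behind \eqref{eq:Hamming-graphs} and Theorem~\ref{thm:p-complete} do not transfer, so there is no reason to expect such an invariant to be maintainable.

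Moreover, the premise that drove you away from the simple argument is false. You claim the three-move mechanism ``provably fails'' for an interior opening $a_1=(i,j)$, but you only tested adjacent and antipodal replies; the \emph{corner} reply $b_1=(1,m)$ works, and this is precisely the paper's proof. By \eqref{eq:intervals-in-CP} and Lemma~\ref{lem:playable}, with $2\le i\le n-1$ and $2\le j\le m-1$ one gets $\Pl_G(a_1,b_1)=\{(k,l):k<i,\ l<j\}\cup\{(k,l):k>i,\ l>j\}$, the union of the two open quadrants, which is nonempty, so A must move again. Any reply $w$ then freezes the board: if $w$ and a candidate $z$ lie in opposite quadrants, then $a_1\in I[w,z]$ (coordinatewise betweenness through $(i,j)$), so $z$ is illegal; if they lie in the same quadrant, then according to whether they are comparable or incomparable in the coordinatewise order, one of $z\in I[w,a_1]$, $w\in I[z,a_1]$, $z\in I[w,b_1]$, $w\in I[z,b_1]$ holds, and again $z$ is illegal. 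Hence $\{a_1,b_1,w\}$ is maximal for every playable $w$, the game ends on A's second move, and B wins. What you missed is that $a_1$ itself blocks all interaction between the two surviving quadrants, so A can never ``answer in the far quadrant.'' With this observation your boundary argument and this corner reply together give the whole theorem in three moves, which is essentially the paper's proof.
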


\proof Let $P_n = u_1\ldots u_n$, $P_m = v_1\ldots v_m$, and set $G= P_n\cp P_m$. First suppose that $a_1$ is a vertex of degree $2$, say $a_1=(u_1,v_1)$. Then B chooses $b_1=(u_{n-1},v_m)$. Since $I_{G}[a_1,b_1]=V(G)\setminus V(^{u_n}P_m)$, player B forces player A to select a vertex from $V( ^{u_n}P_m)$, which is then also the last vertex played in the game. 

Suppose second that $a_1=(u_i,v_j)$ is a vertex of degree at least 3, say $1<j<m$. Then choose $b_1=(u_1,v_m)$. Then clearly $\Pl_G(a_1,b_1)=\{(u_k,v_l)$ $|$ $1\leq k < i$ and $1\leq l<j\}\cup \{(u_k,v_l)$ $|$ $i< k\leq n$ and $j< l\leq m\}$. Let $a_2=(u_k,v_l)$. Now for any $(u_g,v_h)\in \Pl_{G}(a_1,b_1,a_2)$, either $\{a_1,a_2,(u_g,v_h)\}$ or $\{b_1,a_2,(u_g,v_h)\}$ is not a general position set in $G$. Hence B wins the gp avoidance game on $G$.
\qed

We need the following lemma to prove our next result.

\begin{lemma}{\rm\cite[Lemma 3.1]{klavzar-2021a}}\label{lemma:cylinder}
Let $r\geq 2,s\geq 3$, and $S$ be a general position set of the cylinder graph $P_r\cp C_s$. If $|S\cap V(^{i}C_s)|=2$ for some $i\in[r]_0$, then $|S|\leq 4$.
\end{lemma}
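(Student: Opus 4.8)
The plan is to fix the two vertices $p=(u_i,w_a)$ and $q=(u_i,w_b)$ of $S$ lying in the layer ${}^{i}C_s$, to put $\ell=d_{C_s}(w_a,w_b)$, and to rewrite every general position constraint through the product interval formula \eqref{eq:intervals-in-CP}. First, since $|V({}^{i}C_s)\cap S|=2>1$, Lemma~\ref{lemma:2.4} applied to $p$ and to $q$ yields $V(P_r^{w_a})\cap S=\{p\}$ and $V(P_r^{w_b})\cap S=\{q\}$; hence every other vertex $z=(u_j,w_c)\in S$ satisfies $j\ne i$ and $w_c\notin\{w_a,w_b\}$. Call such a $z$ \emph{extra}; it suffices to prove that there are at most two extra vertices. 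By \eqref{eq:intervals-in-CP}, the triple $\{p,q,z\}$ is in general position if and only if $w_a\notin I_{C_s}(w_b,w_c)$ and $w_b\notin I_{C_s}(w_a,w_c)$, and a direct check on the cycle shows this forces $w_c$ to lie either strictly inside the shorter $w_a,w_b$-arc $A_1$ (of length $\ell$) or strictly inside the middle part of the longer arc $A_2$.

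When $\ell=1$ (in particular when $s=3$, where $C_3=K_3$), I would argue through Theorem~\ref{thm:2.1}. Here $p\sim q$, and no vertex outside ${}^{i}C_s$ is adjacent to both $p$ and $q$, so $\{p,q\}$ is one clique of the distance-constant partition of $S$. Consequently each extra vertex $z$ lies in another part and satisfies $d(z,p)=d(z,q)$, that is $d_{C_s}(w_c,w_a)=d_{C_s}(w_c,w_b)$. For adjacent $w_a,w_b$ this equidistant set consists of a single cycle vertex if $s$ is odd and is empty if $s$ is even; thus all extra vertices share one cycle-coordinate and lie in a common isometric $P_r$-layer, where any three of them would put the path-median on the geodesic of the other two. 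Hence there are at most two extra vertices and $|S|\le 4$.

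For $\ell\ge 2$ I split the extra vertices according to the side of the path they occupy, $j<i$ or $j>i$. For two extra vertices $z,z'$ on the same side, \eqref{eq:intervals-in-CP} reduces the triples $\{p,z,z'\}$ and $\{q,z,z'\}$ to cyclic conditions which, when $z$ and $z'$ lie in the same arc ($A_1$ or $A_2$), force their cyclic coordinate to be both increasing and decreasing along the path order; this contradiction shows that each side contains at most one $A_1$-vertex and at most one $A_2$-vertex. For $z,z'$ on opposite sides, the same triples collapse to $w_a,w_b\notin I_{C_s}(w_c,w_{c'})$, which holds when $z,z'$ lie in the same arc but fails whenever one lies in $A_1$ and the other in $A_2$ (then one of the two $w_c,w_{c'}$-arcs passes through $w_a$ and the other through $w_b$). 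Testing the distributions $(3,0),(2,1),(1,2),(0,3)$ of three hypothetical extra vertices over the two sides, each case yields either two same-arc vertices on one side or a forbidden opposite-side $A_1$/$A_2$ pair. Thus at most two extra vertices exist, and $|S|\le 4$.

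The main obstacle is the bookkeeping in the case $\ell\ge 2$: one must verify the increasing/decreasing contradiction uniformly, including when the two same-side vertices share a path-coordinate (so that no path-median exists and the symmetric pair of cyclic conditions must be used instead), and must treat the boundary case $\ell=s/2$, where $A_1$ and $A_2$ are congruent and ``shorter/longer'' is replaced by any fixed labeling of the two $w_a,w_b$-arcs. Granting these verifications, the whole proof rests only on the interval formula \eqref{eq:intervals-in-CP} and Lemma~\ref{lemma:2.4}.
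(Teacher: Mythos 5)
Your proposal addresses a statement that this paper never proves: Lemma~\ref{lemma:cylinder} is imported verbatim from \cite[Lemma 3.1]{klavzar-2021a} and used as a black box, so there is no in-paper proof to compare against; what you have written is a self-contained derivation from tools the paper does provide (Lemma~\ref{lemma:2.4}, the interval formula \eqref{eq:intervals-in-CP}, and Theorem~\ref{thm:2.1} for your $\ell=1$ case), and it is correct. The verifications you flag as the ``main obstacle'' do go through, and in fact more cleanly than your increasing/decreasing phrasing suggests. For a same-side pair with path coordinates $j<j'<i$, the only nontrivial constraints from the triples $\{p,z,z'\}$ and $\{q,z,z'\}$ are $w_{c'}\notin I_{C_s}(w_a,w_c)$ and $w_{c'}\notin I_{C_s}(w_b,w_c)$; since $I_{C_s}(w_a,w_c)\cup I_{C_s}(w_b,w_c)$ is exactly the full arc ($A_1$ or $A_2$) in which $w_c$ lies, the nearer vertex is forced into the \emph{other} arc, with no monotonicity bookkeeping needed. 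The equal-path-coordinate case yields the symmetric pair of constraints $w_{c'}\notin I_{C_s}(w_a,w_c)\cup I_{C_s}(w_b,w_c)$ and $w_c\notin I_{C_s}(w_a,w_{c'})\cup I_{C_s}(w_b,w_{c'})$, which rules out a same-arc pair just as quickly; and when $\ell=s/2$ the two arcs are interchangeable and every constraint reads identically, as you anticipated. Combined with your opposite-side constraint $w_a,w_b\notin I_{C_s}(w_c,w_{c'})$, which correctly forbids mixed $A_1$/$A_2$ pairs across the two sides, the $(3,0),(2,1),(1,2),(0,3)$ pigeonhole eliminates any third extra vertex, giving $|S|\le 4$. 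One small remark: your separate $\ell=1$ treatment via Theorem~\ref{thm:2.1} is correct but unnecessary, since for $\ell=1$ the arc $A_1$ has empty interior and the middle of $A_2$ is a single vertex ($s$ odd) or empty ($s$ even), so the general mechanism already yields at most one extra vertex per side.
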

 
\begin{theorem}\label{theorem:3.13b}
If $n\geq 3$ and $m\geq 2$, then B wins the $\gp$-avoidance game on $C_n\cp P_m$ if and only if $n$ is odd.
\end{theorem}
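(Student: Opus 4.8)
The claim is a biconditional for the cylinder $C_n \cp P_m$: B wins the $\gp$-avoidance game iff $n$ is odd. The natural strategy is to split into the two parity cases for $n$ and, in each case, exhibit an explicit strategy for the winning player, using Theorem~\ref{thm:up-to-gp-set} as the main engine to certify a win once the playable set has been reduced to a controlled general position set of known parity. I would set $C_n = w_1 \ldots w_n w_1$ and $P_m = v_1 \ldots v_m$, write $G = C_n \cp P_m$, and exploit the structure that, by Lemma~\ref{lemma:2.4}, each selected vertex ``kills'' either its whole $C_n$-layer or its whole $P_m$-layer, together with Lemma~\ref{lemma:cylinder}, which severely restricts how two vertices can sit in a common $C_n$-layer inside a general position set.

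**The case $n$ odd (B wins).** Here the idea mirrors the odd-$m$ rook's-graph argument in Theorem~\ref{thm:p-complete}: I would have B play a ``mirroring'' strategy that pairs up the vertices of each visited layer so that B always answers A's move inside the same layer, forcing A to be the one who opens each new layer and, crucially, to play the final move. Concretely, after A's opening move $a_1 = (w_i, v_j)$, B responds within the same $C_n$-layer (or the same $P_m$-layer, whichever keeps the configuration in general position by Lemmas~\ref{lemma:2.4} and~\ref{lemma:3.7}); the oddness of $n$ guarantees that whenever A is forced to open the cycle direction, an odd number of companion vertices remain, so the invariant ``after each of B's moves the set of remaining playable vertices has a prescribed parity'' can be maintained. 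The goal is to drive the game to a position where Theorem~\ref{thm:up-to-gp-set}(ii) applies: the playable set together with the played vertices forms a general position set and has odd cardinality, so B wins.

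**The case $n$ even (A wins).** When $n$ is even I would argue that A can seize the initiative by an opening that makes the remaining playable set a general position set of even size, then apply Theorem~\ref{thm:up-to-gp-set}(i). The parity of $n$ lets A mirror B's moves in the cycle direction (pairing antipodal or adjacent layers), so that B is forced to play last. As in the grid result Theorem~\ref{theorem:4.4}, Lemma~\ref{lemma:cylinder} is what prevents the game from escaping into long configurations: once any layer carries two played vertices the whole general position set has size at most $4$, so only short, easily analyzed endings can occur, and one checks directly that they close in A's favor.

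**Main obstacle.** The delicate point is the bookkeeping of which direction each played vertex ``blocks'': Lemma~\ref{lemma:2.4} gives a dichotomy per vertex, but a general position set may mix vertices that block their $C_n$-layer with vertices that block their $P_m$-layer, and the cycle factor (unlike a path or a complete graph) has nontrivial antipodality and a geodesic structure that depends on the parity of $n$. The hard part will be showing that the winning player can maintain the parity invariant \emph{globally} across all partially filled layers simultaneously, rather than layer-by-layer, and in particular handling the boundary cases where A's opening vertex lies in a layer that interacts with the wrap-around of $C_n$; here a careful case split on whether B (resp.\ A) responds in the cycle direction or the path direction, justified each time by Lemmas~\ref{lemma:2.4}, \ref{lemma:3.7}, and~\ref{lemma:cylinder}, is what makes the argument go through.
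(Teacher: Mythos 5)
Your proposal does not contain a proof: every step that would constitute one is named and then deferred, and the shape of argument you propose --- a long game governed by a globally maintained mirroring/parity invariant, closed out by Theorem~\ref{thm:up-to-gp-set} --- does not match how the game on $C_n\cp P_m$ actually behaves. The paper's proof is of a completely different kind: a forcing sequence of at most four moves. For $n$ odd, B never mirrors anything; B makes \emph{one} tailored reply, $b_1=(u_{i+1},v_j)$, cycle-adjacent to $a_1=(u_i,v_j)$. Because $n$ is odd, the only vertex $u$ of $C_n$ for which $\{u_i,u_{i+1},u\}$ is a general position set of $C_n$ is the vertex equidistant from $u_i$ and $u_{i+1}$, so by~\eqref{eq:intervals-in-CP} the entire playable set collapses into the single layer ${}^{u_{i+\lfloor n/2\rfloor}}P_m$ and the game ends within a move or two. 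Your rule ``B responds within the same $C_n$-layer (or the same $P_m$-layer, whichever keeps the configuration in general position)'' does not select this move, since both options are legal; and the other option loses on the spot: if B replies path-adjacent inside the same $P_m$-layer, then for every $x=(u_k,v_l)$ either $a_1\in I[b_1,x]$ or $b_1\in I[a_1,x]$ (since $u_i$ is an endpoint of $I_{C_n}[u_i,u_k]$, and on the path one of the two adjacent coordinates $v_j, v_{j\pm 1}$ always lies between the other and $v_l$), so $\Pl_G(a_1,b_1)=\emptyset$ and B has just made the last, losing move. Choosing \emph{which} layer to answer in, and where, is the entire content of the theorem; your ``main obstacle'' paragraph concedes exactly this and leaves it open. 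Note also that the endgame you skip --- play inside the one surviving $P_m$-layer --- is genuinely delicate: if A opened at an internal path-coordinate and replies there with $a_2$ whose path-coordinate differs from $v_j$, then the vertices of that layer on the far side of $v_j$ remain playable, so the game does not stop after $a_2$; any complete proof must confront this, and a parity slogan cannot.

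Two of your concrete claims are also wrong. First, Lemma~\ref{lemma:cylinder} does not say ``once any layer carries two played vertices the whole general position set has size at most $4$''; it requires two vertices in a common $C_n$-layer (equal $P_m$-coordinate). When two played vertices share only a $P_m$-layer --- a case that really occurs, namely B replying inside ${}^{u_1}P_m$ in the even-$n$ analysis --- the lemma gives nothing, and the paper must treat that case separately, using bipartiteness (Lemma~\ref{lem:bipartite}) and explicit interval computations. Second, in your even-$n$ case you propose to reach a position where the playable set is a general position set of \emph{even} size and then apply Theorem~\ref{thm:up-to-gp-set}(i); that theorem requires $|\Pl_G(\ldots a_k)|$ to be \emph{odd} (so that the total number of moves is even and B moves last). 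With an even playable set after A's move, all of it gets played, A makes the last move, and A loses the avoidance game. So even at the level of the high-level plan, the parity bookkeeping is inverted, and the proposal cannot be repaired without essentially writing the proof from scratch along the paper's lines.
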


\proof
Let $C_n = u_1\ldots u_{n}u_1$, $P_m = v_1\ldots v_m$, and set $G=C_n\cp P_m$. Consider the $\gp$-avoidance game on $C_n\cp P_m$. 

Suppose first that $n$ is even. We will show that A wins on $G$. Let A start with the vertex $a_1=(u_1,v_1)$.   Let $b_1=(u_i,v_j)$. Since $G$ is connected and bipartite, by using Lemma \ref{lem:bipartite} we can assume that at each stage of the game the selected vertices so far is an independent set of $G$. First if $j=1$, then $2<i< n$. Choose $a_2=(u_2,v_2)$ when $i<\frac{n}{2}$. Since each layer of $G$ is convex, we have that $a_2\notin I_G[a_1,b_1]$. Also since $a_1\notin I_G[a_2,b_1]$ and $b_1\notin I_G[a_1,a_2]$, the second move of A is legal. Now we can see that  $(u_{{\frac{n}{2}+2}},v_2)\in \Pl_{G}(a_1,b_1,a_2)$ since $\{u_i,u_1,u_{\frac{n}{2}+2}\}$ is a general position set of $C_n$ and $u_2\notin I_{C_n}[u_1,u_{\frac{n}{2}+2}]$. Next if $i\geq\frac{n}{2}$, choose $a_2=(u_n,v_2)$. This is a legal move as $I_G[a_1,a_3]=\{(u_n,v_1),(u_1,v_2)\}$. Then using a parallel argument we can see that $(u_2,v_2)\in \Pl_G(a_1,b_1,a_2)$. Hence in both cases, $\Pl_G(a_1,b_1,a_2)\neq \emptyset$. Thus Lemma \ref{lemma:cylinder} in turn implies that A will win the game on $G$. 

Next consider the case $i=1$. Then Lemma \ref{lem:bipartite} in turn implies that $j>2$. Thus A can choose $a_2=(u_{\frac{n}{2}},v_2)$. Since $\{u_1,u_{\frac{n}{2}},u_n\}$ is a general position set of $C_n$, $(u_n,v_2)\in \Pl_G(,\dots,a_2)$. Hence, by Lemma~\ref{lemma:cylinder}, A will win the game on $G$.

For $n$ even, it remains to consider the case $j>1$ and $i>1$  If $i>\frac{n}{2}+1$, then choose $a_3=(u_2,v_j)$. Since $u_2\notin I_{C_n}[u_i,u_1]$, this is a legal move. Then $(u_{i+1},v_1)\in \Pl_G(\dots a_2)$ if $i\neq n$; and $(u_{n-1},v_1)\in \Pl_G(\dots a_2)$ if $i=n$. And if $i\leq\frac{n}{2}$, choose $a_3=(u_n,v_j)$. Then $(u_k,v_1)\in \Pl_G(\dots a_2)$, where $\frac{n}{2}+1<k<n$. Hence as in the previous case A wins the game by Lemma \ref{lemma:cylinder}.

Suppose second that $n$ is odd. In the following we prove that B wins the game on $G$. Let A chooses $a_1=(u_i,v_j)$. Then B replies with the vertex $b_1=(u_{i+1},v_j)$. Let $a_3=(u,v)$. Since both $a_1$ and $b_1$ are adjacent in $G$, the set $\{u_i,u_{i+1},u\}$ must be a general position set of $C_n$. Otherwise, either $a_1\in I_G[a_2,a_3]$ or $a_2\in I_{G}[a_1,a_3]$, which is impossible. Thus $\Pl_{G}(a_1,b_1)\subseteq $ $V(^{u_{i+\lfloor\frac{n}{2}\rfloor}}P_m)$. Let $a_3=(u_{i+\lfloor\frac{n}{2}\rfloor},v_j)$, where $1\leq j \leq m$. Let $x=(u_{i+\lfloor\frac{n}{2}\rfloor},v_k)$ be an arbitrary vertex in $V(^{u_{i+\lfloor\frac{n}{2}\rfloor}}P_m) \setminus \{a_3\}$. Without loss of generality  we may assume that $k>j$. Then $a_3\in I_G[a_2,x]$. Thus $|\Pl_{G}(a_1,b_1)\cap V(^{u_{i+\lfloor\frac{n}{2}\rfloor}}P_m)|=1$. Hence B wins the $\gp$-avoidance game on $G$.
\qed

\section{The avoidance game played on lexicographic products}
\label{sec:Lexico}

The {\em lexicographic product} $G\circ H$ of graphs $G$ and $H$ is the graph with the vertex set $V(G)\times V(H)$, where vertices $(g,h)$ and $(g',h')$ are adjacent if either $gg'\in E(G)$, or $g=g'$ and $hh'\in E(H)$. Layers and projections are defined for the lexicographic product in the same way as they are defined for the Cartesian product. The distance between vertices in lexicographic products can be computed as follows (see \cite{hik-2011}).  

\begin{proposition}
\label{prop:Products:LexDist}
If $(g,h)$ and $(g',h')$ are two vertices of $G\circ H$, then
$$d_{G\circ H}\left((g,h),(g',h')\right)=
\left\{\begin{array}{ll}
d_G(g,g'); & \mbox{$g\ne g'$}\,,\\
d_H(h,h'); & g=g', \deg_G(g)=0\,,\\
\min\{d_H(h,h'), 2\}; & g=g', \deg_G(g)\ne 0\,.\\
\end{array}\right.$$
\end{proposition}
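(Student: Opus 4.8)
The plan is to prove the three cases of the formula directly from the definition of adjacency in $G\circ H$, using as the single key tool the behaviour of the projection $\pi_G$ along a path. The guiding observation is that an edge of $G\circ H$ either changes the first coordinate along an edge of $G$ (in which case the second coordinate is unconstrained), or keeps the first coordinate fixed and changes the second along an edge of $H$. Consequently, if $(p_0,q_0),\ldots,(p_\ell,q_\ell)$ is any path in $G\circ H$, then $p_0,\ldots,p_\ell$ is a walk in $G$ in which each step is either stationary or traverses an edge of $G$; the coordinate-changing steps alone form a $p_0,p_\ell$-walk in $G$, so their number is at least $d_G(p_0,p_\ell)$. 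This yields the inequality $d_{G\circ H}((g,h),(g',h'))\ge d_G(g,g')$ in full generality, which is exactly what is needed for the lower bound in the first case.

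For the first case, suppose $g\ne g'$ and let $g=g_0,g_1,\ldots,g_k=g'$ be a $g,g'$-geodesic in $G$, so $k=d_G(g,g')$. Since consecutive $g_i$ are adjacent in $G$, the vertices $(g_0,h),(g_1,h'),(g_2,h'),\ldots,(g_k,h')=(g',h')$ form a path of length $k$ in $G\circ H$, proving $d_{G\circ H}((g,h),(g',h'))\le d_G(g,g')$. Combined with the projection inequality, this settles the case.

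For the second case, suppose $g=g'$ and $\deg_G(g)=0$. Then no vertex of the $H$-layer $\{g\}\times V(H)$ has a neighbour outside the layer, since an edge leaving it would require an edge of $G$ incident with $g$. Hence every $(g,h),(g,h')$-path stays inside the layer; as the layer is an induced subgraph isomorphic to $H$, in which distances coincide with $d_H$, the distance equals $d_H(h,h')$. For the third case, suppose $g=g'$ and $\deg_G(g)\ne 0$, and assume $h\ne h'$ (the case $h=h'$ being trivial). If $hh'\in E(H)$, then $(g,h)(g,h')$ is an edge and the distance is $1=\min\{d_H(h,h'),2\}$. If $hh'\notin E(H)$, then $(g,h)$ and $(g,h')$ are non-adjacent, so the distance is at least $2$; choosing any neighbour $g''$ of $g$ in $G$ and any $h''\in V(H)$, the path $(g,h),(g'',h''),(g,h')$ has length $2$, whence the distance is exactly $2=\min\{d_H(h,h'),2\}$, since $d_H(h,h')\ge 2$ here.

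I expect no serious obstacle, as the result is classical. The only point requiring genuine care is the lower bound in the first case, where one must argue that the coordinate-changing steps of an arbitrary path form a $g,g'$-walk in $G$, so that the path length is at least $d_G(g,g')$. The remaining subtlety is purely bookkeeping with the two adjacency clauses, in particular remembering that a coordinate-changing step imposes no condition on the second coordinate, which is precisely what makes the lift in the first case and the length-two detour in the third case legal.
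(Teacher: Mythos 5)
Your proof is correct. Note, however, that the paper offers no proof of this proposition at all: it is stated as a known fact with a reference to the Handbook of Product Graphs \cite{hik-2011}, so there is no in-paper argument to compare with. Your argument is the standard one for this classical formula: the projection bound (contracting the stationary steps of an arbitrary path to obtain a $g,g'$-walk in $G$) gives the lower bound $d_{G\circ H}((g,h),(g',h'))\ge d_G(g,g')$, the lifted geodesic $(g_0,h),(g_1,h'),\ldots,(g_k,h')$ gives the matching upper bound when $g\ne g'$, the isolated-vertex case reduces to the induced $H$-layer from which no edge can escape, and the length-two detour through a neighbouring layer settles the clause $\deg_G(g)\ne 0$. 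The edge cases ($h=h'$, vertices on the lifted path being distinct because the first coordinates are, and $d_H(h,h')=\infty$, which your detour argument still covers since $\infty\ge 2$) are all handled by your reasoning, so there is no gap.
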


\begin{lemma}{\rm\cite[Lemma 4.2]{klavzar-2021e}}
\label{lemma:3.15}
 Let $G$ and $H$ be connected graphs. If $S$ is a general position set of $G\circ H$, then $\pi_G(S)$ is a general position set of $G$.
\end{lemma}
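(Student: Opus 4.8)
The plan is to show that projecting a general position set $S$ of $G\circ H$ onto its first coordinate yields a general position set of $G$, by arguing contrapositively: if $\pi_G(S)$ fails to be in general position in $G$, then $S$ already fails in $G\circ H$. So suppose three distinct vertices $g_1,g_2,g_3 \in \pi_G(S)$ lie on a common geodesic of $G$, say with $g_2$ between $g_1$ and $g_3$, meaning $d_G(g_1,g_3)=d_G(g_1,g_2)+d_G(g_2,g_3)$ and all three distances are positive. Each $g_t$ is the first coordinate of some vertex of $S$, so pick $(g_t,h_t)\in S$ for $t=1,2,3$. Since the $g_t$ are pairwise distinct, these three chosen vertices are pairwise distinct in $G\circ H$ regardless of the second coordinates.

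The key step is to transfer the betweenness from $G$ to $G\circ H$ using Proposition~\ref{prop:Products:LexDist}. Because $g_1,g_2,g_3$ are pairwise distinct, the distance formula gives $d_{G\circ H}((g_s,h_s),(g_t,h_t))=d_G(g_s,g_t)$ for every pair among our three vertices, independently of the $h$-coordinates. Hence the additive relation $d_{G\circ H}((g_1,h_1),(g_3,h_3))=d_{G\circ H}((g_1,h_1),(g_2,h_2))+d_{G\circ H}((g_2,h_2),(g_3,h_3))$ holds, which says exactly that $(g_2,h_2)\in I_{G\circ H}[(g_1,h_1),(g_3,h_3)]$. Thus the three distinct vertices $(g_1,h_1),(g_2,h_2),(g_3,h_3)$ of $S$ lie on a common geodesic in $G\circ H$, contradicting that $S$ is a general position set. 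This contradiction forces $\pi_G(S)$ to be in general position.

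I expect the main obstacle to be a clean handling of the selection of representatives and the case distinctions in Proposition~\ref{prop:Products:LexDist}. The one subtlety is ensuring the three representatives are genuinely distinct; this is automatic here because distinct first coordinates already distinguish the vertices, so the second coordinates $h_1,h_2,h_3$ need not be distinct and play no role. It is also worth noting that we only ever invoke the first case of the distance formula (the $g\ne g'$ case), so the degree-based cases are irrelevant—the entire argument rests on the fact that inter-layer distances in the lexicographic product depend solely on the $G$-coordinate. The remaining details are routine verifications, so no heavy computation is required.
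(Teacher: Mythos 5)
Your proof is correct. Note that this paper does not actually prove the lemma---it is quoted from \cite[Lemma 4.2]{klavzar-2021e}---so there is no in-paper proof to compare against; but your argument (contrapositive, choosing representatives $(g_t,h_t)\in S$, and transferring the betweenness relation $d_G(g_1,g_3)=d_G(g_1,g_2)+d_G(g_2,g_3)$ to $G\circ H$ via the $g\ne g'$ case of Proposition~\ref{prop:Products:LexDist}) is the standard and expected one, and every step checks out, including the observation that distinct first coordinates make the representatives distinct and that the triangle equality is equivalent to $(g_2,h_2)\in I_{G\circ H}[(g_1,h_1),(g_3,h_3)]$.
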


We need the following lemma which was proved as a claim within the proof of Theorem 4.3 in~\cite{klavzar-2021e}. 
\begin{lemma}
\label{lemma:3.15b}
Let $G$ be a connected graphs, $n\ge 1$, and $S\subseteq V(G\circ K_n)$. Then $S$ is a maximal general position set of $G\circ K_n$ if and only if $S= S_G \times V(K_n)$, where $S_G$ is a maximal general position set of $G$.
\end{lemma}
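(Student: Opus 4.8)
The plan is to reduce the entire statement to a single clean equivalence, which I will call Claim~A: a set $S\subseteq V(G\circ K_n)$ is a general position set of $G\circ K_n$ if and only if its projection $\pi_G(S)$ is a general position set of $G$. The forward implication of Claim~A is exactly Lemma~\ref{lemma:3.15}. Once Claim~A is available, both directions of the lemma, including the maximality statements, follow from short projection arguments, so the real work lies in proving the converse of Claim~A.

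To prove the converse of Claim~A, I would analyze when three distinct vertices of $G\circ K_n$ lie on a common geodesic, using Proposition~\ref{prop:Products:LexDist}. Two facts drive everything. First, vertices sharing a $G$-coordinate are twins: two distinct vertices of one $K_n$-layer are at distance $1$ from each other and equidistant from every other vertex (this follows directly from Proposition~\ref{prop:Products:LexDist}, since $K_n$ is complete). Hence if two of the three vertices, say $x$ and $y$, lie in a common layer, the third vertex $w$ satisfies $d(x,w)=d(y,w)=:d\ge 1$, and all three betweenness relations fail (both $d+d=1$ and $1+d=d$ are impossible), so the triple does not lie on a common geodesic. Second, if the three vertices have pairwise distinct $G$-coordinates $g_1,g_2,g_3$, then their mutual distances in $G\circ K_n$ equal the corresponding $G$-distances; since lying on a common geodesic is a betweenness condition depending only on pairwise distances, such a triple lies on a common geodesic of $G\circ K_n$ exactly when $g_1,g_2,g_3$ lie on a common geodesic of $G$. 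Combining the two facts: every triple of $S$ with a repeated coordinate is automatically in order, and every triple with distinct coordinates is in order precisely when the corresponding elements of $\pi_G(S)$ are in general position, which holds whenever $\pi_G(S)$ is a general position set. This proves the converse, and hence Claim~A.

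With Claim~A in hand I would finish as follows. For the ``if'' direction, if $S_G$ is a maximal general position set of $G$ and $S=S_G\times V(K_n)$, then $\pi_G(S)=S_G$ gives that $S$ is a general position set; and if some $(g,h)\notin S$ could be added, its coordinate $g$ must lie outside $S_G$ (otherwise $(g,h)\in S_G\times V(K_n)=S$), so $\pi_G(S\cup\{(g,h)\})=S_G\cup\{g\}$ would be a general position set properly containing $S_G$, contradicting maximality. For the ``only if'' direction, let $S$ be maximal and put $S_G=\pi_G(S)$, a general position set of $G$ by Lemma~\ref{lemma:3.15}. The inclusion $S\subseteq S_G\times V(K_n)$ is immediate. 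For the reverse inclusion, if $(g,h_0)\in S$ but $(g,h)\notin S$ for some $h$, then $\pi_G(S\cup\{(g,h)\})=S_G$ is still a general position set, so by Claim~A the set $S\cup\{(g,h)\}$ is a general position set, contradicting maximality; hence $S=S_G\times V(K_n)$. Finally, $S_G$ must be maximal in $G$: any $g^*\notin S_G$ with $S_G\cup\{g^*\}$ a general position set would let us add any $(g^*,h^*)$ to $S$, since $\pi_G(S\cup\{(g^*,h^*)\})=S_G\cup\{g^*\}$ remains a general position set and $(g^*,h^*)\notin S$, again contradicting maximality of $S$.

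The main obstacle is the converse of Claim~A, specifically the two distance facts and their case analysis; everything after that is routine projection bookkeeping. I would also note the trivial boundary cases $n=1$ (where $G\circ K_1\cong G$) and $G\cong K_1$ (where $G\circ K_n\cong K_n$), in which the layer arguments degenerate but the conclusion is immediate.
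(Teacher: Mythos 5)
Your proof is correct, so the main point of comparison is this: the paper does not actually prove Lemma~\ref{lemma:3.15b} at all --- it imports it, noting that it ``was proved as a claim within the proof of Theorem 4.3 in~\cite{klavzar-2021e}'' --- so your argument is not a variant of the paper's proof but a self-contained replacement for a citation. Your route goes through a strictly stronger intermediate statement (your Claim~A): $S\subseteq V(G\circ K_n)$ is a general position set of $G\circ K_n$ if and only if $\pi_G(S)$ is a general position set of $G$. The forward half is indeed Lemma~\ref{lemma:3.15}, and your converse is sound: by Proposition~\ref{prop:Products:LexDist}, two vertices of a common $K_n$-layer are adjacent and equidistant from every third vertex, so none of the three betweenness equalities of the form $d(x,z)=d(x,y)+d(y,z)$ can hold for a triple containing such a pair, while a triple with pairwise distinct $G$-coordinates has exactly the distances of its projected triple, hence lies on a common geodesic of $G\circ K_n$ precisely when the projection lies on a common geodesic of $G$. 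The maximality bookkeeping that follows (both inclusions in the ``only if'' direction, and the transfer of maximality in each direction) is airtight. What your approach buys: it makes the paper self-contained; it isolates exactly where completeness of the second factor is needed (within-layer distances equal $1$, killing betweenness through a layer --- Claim~A already fails for $H=P_3$, where a layer triple $(g,v_1),(g,v_2),(g,v_3)$ has distances $1,1,2$ and so lies on a common geodesic although its projection is a single vertex); and Claim~A is a reusable characterization of \emph{all} general position sets of $G\circ K_n$, not only the maximal ones. What the paper's citation buys is only brevity.
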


\begin{theorem}
\label{thm:3.17}
If $G$ is a connected graph and $n\ge 1$, then B wins the $\gp$-avoidance game on $G\circ K_n$ if and only if B wins the $\gp$-avoidance game on $G$ and $n$ is odd.
\end{theorem}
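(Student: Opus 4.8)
The plan is to reduce the $\gp$-avoidance game on $G\circ K_n$ to the game on $G$ by exploiting the structure of maximal general position sets given by Lemma~\ref{lemma:3.15b}. The crucial structural fact is that every maximal general position set of $G\circ K_n$ has the form $S_G\times V(K_n)$ where $S_G$ is maximal in $G$. This means that in any play of the avoidance game on $G\circ K_n$, whenever a vertex $(g,h)$ is played, Lemma~\ref{lemma:2.4}-type reasoning (combined with the distance formula in Proposition~\ref{prop:Products:LexDist}) shows that the entire fibre $\{g\}\times V(K_n)$ becomes the only relevant interaction within that layer: since $K_n$ is complete, all $n$ vertices of a fibre are mutually in general position and, projected to $G$, they collapse to the single vertex $g$. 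So playing inside a fibre never constrains which vertices of other fibres are playable; the constraints between fibres are governed entirely by the projection to $G$.

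First I would make precise the ``fibre-pairing'' idea. The key observation is that a vertex $(g',h')$ is playable after a set $S$ has been played if and only if $g'$ is playable in $G$ after $\pi_G(S)$ has been played \emph{and} not all of the fibre $\{g'\}\times V(K_n)$ is already occupied; this follows from Lemma~\ref{lemma:3.15}, Proposition~\ref{prop:Products:LexDist}, and Lemma~\ref{lemma:3.15b}. Consequently the game decomposes: filling a fibre of size $n$ takes exactly $n$ moves once its base vertex $g$ is chosen, and the parity of $n$ controls who is forced to make the ``next'' move after a fibre is exhausted. I would then argue that the winning player on $G\circ K_n$ can simulate the game on $G$, using a pairing strategy inside each fibre to control parity. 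Because $n$ is odd exactly when filling a complete fibre flips whose turn it is in a way that mirrors a single move of the $G$-game, the winning player on $G$ together with oddness of $n$ transfers to a win on $G\circ K_n$.

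The argument would split according to the parity of $n$ and who wins on $G$. If $n$ is even, I would give B a pairing response entirely within fibres: whenever A plays a vertex of a fibre, B answers in the same fibre, so that the parity within each fibre is always restored to even before the base-game structure forces a new fibre; this lets B control the final move regardless of the $G$-outcome, forcing A to be last and hence A wins the avoidance game, so B loses — establishing the ``only if'' direction on the $n$ even side. If $n$ is odd, the fibre-filling parity lines up with single moves on $G$, and I would show that the player with the winning strategy on the avoidance game on $G$ can lift it: each of his moves in $G$ is realized as starting a new fibre, and he uses an in-fibre pairing to ensure that fibre is completed on the ``right'' parity, reducing the terminal move of the product game to the terminal move of the $G$-game. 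The two directions of the biconditional then follow by symmetry of the argument between A and B.

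The main obstacle I expect is making the fibre-pairing strategy robust against an \emph{adversary who refuses to cooperate} — for instance, a player who deliberately leaves fibres partially filled, or who plays in a new fibre before an old one is exhausted. I would need to verify via Lemma~\ref{lemma:playable} and Lemma~\ref{lemma:3.15b} that a partially filled fibre imposes no extra interval constraints beyond those coming from the projection to $G$, so that the simulating player can always safely postpone completing a fibre, or complete it in response, without ever being blocked; the delicate point is confirming that the maximal-general-position characterization forces \emph{every} terminal position to have all nonempty fibres fully occupied, which is exactly what Lemma~\ref{lemma:3.15b} guarantees and is what ultimately couples the product game's parity to that of the $G$-game.
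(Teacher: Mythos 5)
Your overall approach is the same as the paper's: use Lemma~\ref{lemma:3.15b} to reduce everything to the parity of (number of opened $K_n$-layers) times $n$, and lift the winning strategy from $G$ by realizing $G$-moves as layer openings. However, as written there are two genuine gaps. First, the $n$ even case is argued backwards. In the avoidance game the player who makes the last move \emph{loses}, so ``forcing A to be last and hence A wins'' is a contradiction in terms; moreover B cannot force A to be last when $n$ is even. The correct (and much simpler) argument needs no strategy at all: by Lemma~\ref{lemma:3.15b} every terminal position equals $S_G\times V(K_n)$ with $S_G$ maximal in $G$, hence has even cardinality $|S_G|\cdot n$, so B makes the last move in \emph{every} play and loses, i.e.\ A wins regardless of how anyone plays. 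Your pairing strategy for B, even where feasible, only guarantees that B moves last, which is exactly what makes B lose.

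Second, in the $n$ odd case the real content of the proof is missing. ``In-fibre pairing to ensure the fibre is completed on the right parity'' cannot work as stated: each fibre has odd size $n$, so pairing inside a fibre cannot control who completes it. What is actually needed (and what the paper does) is: the simulating player answers every opening of a new layer with his $G$-strategy's reply played in a new layer, and answers every move inside an already-opened layer with an arbitrary move inside an already-opened layer. One must then verify (i) the $G$-strategy's reply always lies in an unopened layer (true, since a playable vertex of $G$ is not in $\pi_G(S)$); (ii) such a ``pass'' reply is always available, which follows from the invariant that after each of the simulating player's moves the number of free slots in opened layers, $kn-p$, is even; and (iii) by Lemma~\ref{lemma:3.15b} all opened layers are full at the end, so the total move count is $kn$ with the parity of $k$ dictated by the winning strategy on $G$. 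You correctly identify this non-cooperating-adversary issue as the main obstacle, but you explicitly defer it (``I would need to verify\dots''), and it is precisely the step that constitutes the proof. A smaller point: your playability criterion should be a disjunction, not a conjunction: a vertex $(g',h')\notin S$ is playable if and only if $g'\in\pi_G(S)$ \emph{or} $g'$ is playable in $G$ after $\pi_G(S)$; as stated, your criterion wrongly declares the remaining vertices of a partially filled layer unplayable.
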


\proof Suppose first that $n$ is odd and B wins the $\gp$-avoidance game on $G$. We will show that B can win the game on $G\circ H$ using the following strategy.  Suppose first that A selects a vertex $a_i$ from a $K_n$-layer from which no vertex was played earlier. If there exists a vertex $x$ such that no vertex from its $K_n$-layer has been played yet and $\pi_G(x)$ is the reply of B to the move $\pi_G(a_{i})$ of A played in $G$ according to the winning strategy of B in $G$, then B selects $b_{i} = x$. Otherwise B replies with an arbitrary vertex from some $K_n$-layer in which at least one vertex has already been played. Suppose second that A selects a vertex $a_i$ from a $K_n$-layer from which at least one vertex was played earlier. Then again B replies with an arbitrary vertex from some $K_n$-layer in which at least one vertex has already been played. Since B wins the $\gp$-avoidance game on $G$, the described strategy ensures that by the end of the game, vertices from an odd number of $K_n$-layers will be played. Moreover, in view of Lemma~\ref{lemma:3.15b}, all the vertices from these layers will be played. Since $n$ is odd, the total number of vertices played by the end of the game will be odd, meaning that A will play the last move and hence B will win the $\gp$-avoidance game on $G\circ K_n$. 

It remains to prove that in the other cases A has a winning strategy.  If $n$ is even, then it follows from Lemma \ref{lemma:3.15b} that an even number of vertices will be played during the game. This shows that A wins the game on $G\circ K_n$.  On the other hand, if A wins on $G$, then A starts the game by choosing a vertex $a_1$ such that $\pi_G(a_1)$ is the starting vertex according to the winning strategy of B in $G$. Then A follows a strategy parallel to the strategy of B from the first paragraph to win the game on $G\circ K_n$. 
\qed

We next give a sufficient condition on $G$ which guarantees that A wins the $\gp$-avoidance game on $G\circ H$. 

\begin{proposition}
\label{prop:3.19}
Let $G$ and $H$ be connected graphs. If there exists a vertex $u$ in $G$ such that for any $v\neq u\in V(G)$, $\Pl_{G}(u,v)\cup\{u,v\}$ is a clique of even order, then A wins the $\gp$-avoidance game on $G\circ H$. 
\end{proposition}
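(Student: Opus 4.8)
The plan is to let A open the game with $a_1=(u,h_1)$ for an arbitrary $h_1\in V(H)$, where $u$ is the distinguished vertex furnished by the hypothesis, and then to exploit the \emph{even} order of the cliques $Q_v:=\Pl_G(u,v)\cup\{u,v\}$ to force the total number of played vertices to be even. Since A moves first, an even total means that B picks the last vertex and thus loses the avoidance game. Throughout, the engine will be Theorem~\ref{thm:up-to-gp-set}(i): A tries to reach, right after one of its own moves, a position in which the set of still-playable vertices together with the played ones is a general position set and has odd cardinality.

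The first and most solid step is a confinement argument. Suppose B answers with $b_1=(v,h')$ where $v\ne u$. Writing $S$ for the set of played vertices, we always have $u,v\in\pi_G(S)$, so by Lemma~\ref{lemma:3.15} the projection $\pi_G(S)$ must remain a general position set of $G$. Consequently, any playable vertex with $G$-coordinate $w\notin\{u,v\}$ must satisfy that $\{u,v,w\}$ is a general position set, i.e.\ $w\in\Pl_G(u,v)$; hence every playable vertex projects into $Q_v$. Thus the entire remaining game is confined to the subgraph induced by $Q_v\times V(H)$, which, since $Q_v$ is a clique, is isomorphic to $K_{2t}\circ H$ with $2t=|Q_v|$ even. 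Using Proposition~\ref{prop:Products:LexDist} together with Theorem~\ref{thm:2.1}, one then describes the general position sets of this region: once the played set meets at least two layers (which already holds after $a_1,b_1$ because $u\ne v$), a set is in general position precisely when, in each layer, its vertices induce a clique of $H$ — for if two vertices of one layer were at $H$-distance at least $2$, then any vertex of another layer would lie on a geodesic between them.

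The model case $H=K_1$ is instructive and clean: there $K_{2t}\circ K_1=K_{2t}$, the set $Q_v$ is a general position set in its entirety, and after A plays any $a_2\in\Pl_G(u,v)$ one has $\Pl_G(u,v,a_2)\cup\{u,v,a_2\}=Q_v$ with $|\Pl_G(u,v,a_2)|=2t-3$ odd, so Theorem~\ref{thm:up-to-gp-set}(i) applies directly and A wins (the case $2t=2$, where $\Pl_G(u,v)=\emptyset$, is immediate since B already played last). For general $H$, A must instead steer the per-layer clique-building inside $K_{2t}\circ H$ so as to hand B an odd, frozen remainder. The intended device is to use the even number $2t$ of layers: A fixes a fixed-point-free involution $\tau$ of the layers of $Q_v$ with $\tau(u)=v$ and plays so that after each of its moves the played vertices in each pair $\{g,\tau(g)\}$ of layers are $H$-balanced; the symmetry transfers the legality of B's moves to A's replies and keeps the total played count controlled, with A's unmatched opening $a_1$ providing the parity shift needed to make B the last to move.

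The main obstacle is twofold, and is where the argument must be carried out carefully. First, for arbitrary connected $H$ the leftover playable set need not be a clique — two incompatible vertices of one unfilled layer may both be playable — so Theorem~\ref{thm:up-to-gp-set}(i) does not always apply verbatim, and one must sharpen it to a ``forced last move'' position in which every move available to B is terminal; reconciling this with the layer pairing and with the asymmetry created by the opening pair $a_1,b_1$ (whose $H$-coordinates $h_1,h'$ may differ) requires A's second move to restore balance before the balancing responses begin. Second, the reply $v=u$, in which B plays inside A's own layer and the projection is still the single vertex $u$, must be handled separately: here A first plays in a fresh layer to re-enter the confinement regime governed by some even clique $Q_w$, after which the preceding analysis applies. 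Verifying that all these initial adjustments preserve the even parity of the total — equivalently, that A always reaches a position forcing B to take the final vertex — is the crux of the proof.
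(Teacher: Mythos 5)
Your opening steps are sound: the confinement argument (if B replies in a layer $v\neq u$, then by Lemma~\ref{lemma:3.15} all further play projects into $Q_v=\Pl_G(u,v)\cup\{u,v\}$, an even clique) and your characterization of general position sets inside $Q_v\times V(H)$ (once two layers are occupied, a set is in general position iff each layer's part induces a clique of $H$, i.e.\ iff the set is a clique of $G\circ H$) are both correct, and the latter is in fact more careful than the paper. But the heart of your proof --- the fixed-point-free involution $\tau$ and the ``$H$-balanced'' mirroring strategy --- is only announced, and you yourself flag its verification as ``the crux.'' That gap cannot be closed: in the confined game the final position is forced to be a union, over the layers of $Q_v$, of maximal cliques of $H$ (with prescribed seeds in layers $u$ and $v$), so the parity of the total is a sum of sizes of maximal cliques of $H$; when $H$ has maximal cliques of both parities, it is B who controls this sum. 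Your treatment of the reply $v=u$ is also broken: the hypothesis forces $u$ to be universal in $G$ (if $d_G(u,w)\ge 2$ then $\Pl_G(u,w)\cup\{u,w\}$ is not a clique), so if B answers with a vertex of layer $u$ nonadjacent to $a_1$, every vertex outside layer $u$ lies in $I[a_1,b_1]$ and A can never ``play in a fresh layer.''

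In fact the statement itself is false as written, so no strategy can complete your argument. Take $G=K_2$ (the hypothesis holds: $\Pl_G(u,v)\cup\{u,v\}=\{u,v\}$ is a clique of order $2$) and let $H$ be the paw, i.e.\ a triangle $xyz$ with a pendant vertex $w$ adjacent to $x$; its maximal cliques are $\{x,y,z\}$ and $\{x,w\}$. In $K_2\circ H$: if A opens over $y$ or $z$, B replies over $w$ in the other layer, and the final set is forced to be the unique maximal clique through both seeds, of size $3+2=5$, so A makes the last move and loses; symmetrically, if A opens over $w$, B replies over $y$. If A opens over $x$, B replies over $x$ in the other layer; then whichever clique A's next move commits one layer to, B commits the other layer to the opposite-parity clique, again forcing $5$ moves in total. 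Hence B wins on $K_2\circ H$, contradicting Proposition~\ref{prop:3.19}. You should know that the paper's own proof fails at exactly the analogous point: it cites Lemma~\ref{lemma:3.15b} to claim that \emph{all} of $V(G')\times V(H)$ is played by the end of the game, but that lemma requires the \emph{second} factor to be complete (the lexicographic product is not commutative), and already for $H=P_3$ the set $V(G')\times V(H)$ is not in general position. The proposition is safe when $H$ is complete, but in that case it is subsumed by Theorem~\ref{thm:3.17}, since the hypothesis forces A to win the $\gp$-avoidance game on $G$ itself (after $a_1=u$ and any reply $v$, the game on $G$ fills up exactly the even clique $Q_v$). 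So the correct conclusion of your analysis is not a repaired pairing strategy but an added hypothesis on $H$.
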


\proof Consider the $\gp$-avoidance game on $G\circ H$. The initial strategy of A is to play $a_1=(u,v_1)$, where $v_1$ is an arbitrary vertex of $H$ and $u$ is such that for any $v\neq u\in V(G)$, $\Pl_{G}(u,v)\cup\{u,v\}$ is a clique of even order in $G$. Then, after the first move of B, Proposition \ref{prop:Products:LexDist} implies that the entire game is restricted to a subgraph of $G\circ H$ induced by $V(G')\times V(H)$, where $G'$ is a clique of $G$ of even order. Moreover, by Lemma~\ref{lemma:3.15b}, all vertices of $V(G')\times V(H)$ will be played by the end of the game. As there are even number of these vertices, B plays the last vertex and A wins. 
\qed

Obvious examples of graphs $G$ that fulfill the assumption of Proposition~\ref{prop:3.19} are complete graphs of even order. Another such family is formed by the graphs that are obtained from a disjoint union of complete graphs of even order by selecting one vertex in each of these complete subgraphs and identifying all of them into a single vertex.

\section{Conclusions and open problems}

To conclude, we provide a list of open questions and directions for further research. The first problem that we propose is the complexity of the $\gp$-achievement game and the $\gp$-avoidance game for graphs with diameter at most $3$. We tried to obtain a reduction from Node Kayles, but the diameter 4 seems to be a strong restriction of our reduction. Are these games polynomial time solvable for graphs with diameter 2? This question is also unsolved.

Another interesting problem is to determine the player with a winning strategy in other simple graph classes, such as cographs and $P_4$-sparse graphs \cite{jamison92}, and strong product of graphs in both general position games.

It is also interesting to see that the $\gp$-achievement game on generalized wheels $W_{n,m}$ is still unsolved. See the technical discussion after Theorem \ref{thm:4.2}, which solved the $\gp$-avoidance game on these graphs. There is a relation between the $\gp$-achievement game on generalized wheels and the game in which three selected vertices cannot induce a $P_3$, which is interesting by itself and seems to be hard even in cycles.

Other open problem is to solve the $\gp$-avoidance game in connected bipartite graphs. In {\rm\cite{klavzar-2021e}}, it is proved that player B always wins the $\gp$-achievement game in this graph class, but Theorem \ref{theorem:3.13b} shows that this is not true even in cylinders (a subclass of connected bipartite graphs).

Finally, an interesting problem is to determine the winner in the $\gp$-avoidance game played on $K_n\circ H$, where $H$ is an arbitrary graph.

\section*{Acknowledgments}

Rudini Sampaio was supported by CAPES [88881.197438/2018-01] and [88881.712024/2022-01], CNPq [311070/2022-1], and FUNCAP [186-155.01.00/2021].
Neethu P. K. acknowledges the Council of Scientific and Industrial Research (CSIR), Govt.\ of India for providing financial assistance in the form of Junior Research Fellowship.
Sandi Klav\v{z}ar acknowledges the financial support from the Slovenian Research Agency (research core funding P1-0297, and projects N1-0095, J1-1693, J1-2452).

\bibliographystyle{plain}


\end{document}